\documentclass[11pt,letterpaper,reqno]{amsart} 
\usepackage[top=3.3cm,bottom=3.2cm,left=3.3cm,right=3.3cm]{geometry}

\usepackage[utf8]{inputenc}
\usepackage{amssymb,amsmath,amsfonts,eucal,mathrsfs,amsthm,xcolor} 
\usepackage{mathtools}
\usepackage{enumitem}
\usepackage{amsrefs}
\usepackage{stmaryrd}
\usepackage{hyperref}
\hypersetup{
    colorlinks=true,
    filecolor=magenta,      
    linkcolor={blue!50!black},
    citecolor={red!50!black},
    urlcolor={blue!80!black}
}

\usepackage{dutchcal}

\newtheorem{theorem}{Theorem}
\newtheorem{proposition}{Proposition}
\newtheorem{lemma}[proposition]{Lemma}

\newtheorem{corollary}[proposition]{Corollary}

\newtheorem{remarks}[proposition]{Remarks}

\newtheorem*{question}{Question}

\theoremstyle{definition}

\newcommand{\R}{\mathbb{R}}

\newcommand{\Sf}{\mathbb{S}}

\newcommand{\Ric}{\mbox{Ric}}

\newcommand{\End}{\mbox{End}}
\newcommand{\Hom}{\mbox{Hom}}

\newcommand{\Tp}{T_A^{[p]}}
\newcommand{\tr}{\mathrm{tr}}

\def\bea{\begin{eqnarray*} }
\def\eea{\end{eqnarray*} }

\def\beq{\begin{equation}}
\def\Z{\mathord{\mathbb Z}}

\def\B{\mathcal{B}}
\def\<{{\langle}}
\def\>{{\rangle}}

\def\n{\nabla}

\def\a{\alpha}

\def\be{\begin{equation} }
\def\ee{\end{equation} }

\begin{document}

\title{On $Q$-convex hypersurfaces in Riemannian manifolds}
\author{Giulio Colombo and Christos-Raent Onti}
\maketitle

\renewcommand{\thefootnote}{\fnsymbol{footnote}} 
\footnotetext{\emph{2020 Mathematics Subject Classification.} 53C40, 53C42, 53C20, 53C21}  
\renewcommand{\thefootnote}{\arabic{footnote}} 

\renewcommand{\thefootnote}{\fnsymbol{footnote}} 
\footnotetext{\emph{Keywords.} $q$-convex immersed hypersurfaces, pinched hypersurfaces, Betti numbers, Bochner technique}  
\renewcommand{\thefootnote}{\arabic{footnote}} 

\begin{abstract}
We prove that any closed, convex hypersurface in an $(n+1)$-dimensional Riemannian manifold with $\lceil \frac{n}{2} \rceil$-positive curvature operator is a rational homology sphere with finite fundamental group. The same conclusion holds for any $\lceil \frac{n}{2} \rceil$-convex hypersurface, provided that the mean curvature satisfies a sharp pinching condition.
Both results follow from more general vanishing and estimation theorems for the Betti numbers of closed $q$-convex immersed hypersurfaces in $(n+1)$-dimensional Riemannian manifolds, under a lower bound on the average of the smallest $(n-p)$ eigenvalues of the curvature operator. 
\end{abstract}

\section{Introduction}
Let $M$ be an $(n+1)$-dimensional Riemannian manifold, and let $f \colon \Sigma \to  M$ be an immersed hypersurface.
In this paper, we always assume that $\Sigma$ is closed, connected, oriented and $f$ is two-sided (that is, there exists a global non-vanishing unit normal vector field $\nu$).
We say that $f$ is $q$-convex (resp. strictly $q$-convex) for some integer $1 \leq q \leq n$ if the sum of its smallest $q$ principal curvatures is nonnegative (resp. positive) at each point $x \in \Sigma$, with respect to $\nu$. 
Observe that $q$-convexity implies $p$-convexity for all $p \geq q$ and non-negativity $H\geq 0$ of the mean curvature in the direction of $\nu$. 

The special case $q = 1$ corresponds to convexity (that is, all principal curvatures are nonnegative). 
In fact, when the ambient space is the Euclidean space, we know that $\Sigma$ is diffeomorphic to the sphere $\Sf^n$ by the work of Hadamard \cite{ha1897}, Chern-Lashof \cite{cl58}, Heijenoort \cite{vh52}, and Sacksteder \cite{sac60} (see also do Carmo-Lima \cite{dl69}). The same holds in nonflat space forms due to do Carmo and Warner \cite{docawa} and also in Hadamard manifolds (that is, complete, simply connected Riemannian manifolds with nonpositive sectional curvature) due to Alexander \cite{alex77}. 
For further discussion regarding this case, we refer the reader to the survey of Lima \cite{lima} and the references therein.

The next case of $2$-convexity has also attracted considerable attention. In particular, the topological classification in Euclidean space $\R^{n+1}$, $n \geq 3$, was established by Huisken and Sinestrari \cite{hs09}, who showed that such hypersurfaces are diffeomorphic either to the sphere $\Sf^n$ or to a connected sum of $\Sf^{n-1} \times \Sf^1$. Their approach relied on the mean curvature flow with surgery together with the fact that $2$-convexity is preserved along the flow. As pointed out by Brendle and Huisken \cite{bh17}, this may no longer be true for the mean curvature flow in general ambient Riemannian manifolds. 
To overcome this difficulty, they introduced a fully nonlinear flow preserving, among other things, strict $2$-convexity in general ambient spaces. Their work implies that compact $(n+1)$-dimensional Riemannian manifolds, $n \geq 3$, satisfying
$
\bar{R}_{1313} + \bar{R}_{2323} \ge 0,
$
and having non-empty strictly $2$-convex boundary are diffeomorphic to a $1$-handlebody. The case $n = 2$ was also treated by Brendle and Huisken \cite{bh16,bh18}.

More generally, for $q$-convexity with $1 \leq q \leq n-1$, Sha \cite{sha86} (see also Wu \cite{wu87}) proved that every compact $n$-dimensional Riemannian manifold with nonnegative sectional curvature and strictly smooth $q$-convex boundary has the homotopy type of a CW-complex of dimension at most $q-1$. As a consequence, if $q \leq \frac{n}{2}$, then the $i$-th real homology group of the boundary vanishes for all $q \leq i \leq n-q$.

In the present paper, we apply the Bochner technique to provide new vanishing and estimation results for the Betti numbers of 
$q$-convex immersed hypersurfaces in general ambient Riemannian manifolds, under a lower bound on the average of the smallest $(n-p)$ eigenvalues 
of the curvature operator. We recall that the curvature operator of a Riemannian manifold is called $\ell$-positive (resp. $\ell$-nonnegative) 
if the sum of its smallest $\ell$ eigenvalues is positive  (resp. nonnegative). In this sense, our results may be viewed as an extrinsic analogue of the work of Petersen and Wink \cite{pw21}.

Our first main theorem is the following:

\begin{theorem}\label{mainA1}
Let $n\geq 3$ and $1\leq q\leq p\leq \lfloor \frac{n}{2} \rfloor$ be integers. If $M$ is an $(n+1)$-dimensional 
Riemannian manifold with $(n-p)$-nonnegative curvature operator and $f\colon \Sigma \to M$ a closed, 
connected, oriented, $q$-convex immersed hypersurface, then the following hold:

\begin{enumerate}[topsep=1pt,itemsep=1pt,partopsep=1ex,parsep=0.5ex,leftmargin=*, label={\rm(\arabic*)}, align=left, labelsep=0.4em]
\item The Betti numbers of $\Sigma$ satisfy
$
b_i(\Sigma)=b_{n-i}(\Sigma)\leq \binom{n}{i}, 
$
for each $q\leq i\leq p$.
\item If either the curvature operator of $M$ is $(n-p)$-positive at some point $x\in f(\Sigma)$ or $f$ is strictly $q$-convex at some point, 
then 
\[
b_q(\Sigma)=\cdots=b_p(\Sigma)=0 \quad \text{and}\quad b_{n-p}(\Sigma)=\cdots=b_{n-q}(\Sigma)=0.
\]
Moreover, if $q=1$, then $M$ admits a Riemannian metric of positive Ricci curvature and has finite fundamental group. 
In particular, if $n=3$, then it is diffeomorphic to a spherical space form.
\item If $b_q(\Sigma)>0$, then every harmonic 
$q$-form is parallel. In particular, $\Sigma$ supports a nontrivial parallel $q$-form. 
\end{enumerate}
\end{theorem}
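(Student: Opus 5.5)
The plan is to run the Bochner technique on $\Sigma$ with its induced metric, and to split the Weitzenböck curvature term into an ambient part and an extrinsic part using the Gauss equation. For a $p$-form $\omega$ on $\Sigma$ we have the Weitzenböck formula $\Delta\omega=\nabla^*\nabla\omega+\mathrm{Ric}_p(\omega)$. Here $\mathrm{Ric}_p$ is linear in the algebraic curvature tensor of $\Sigma$. The Gauss equation writes that tensor as $R_\Sigma=\bar R^{T}+\tfrac12 A\owedge A$, where $\bar R^T$ is the restriction of the ambient curvature tensor of $M$ to $T\Sigma$ and $A$ is the second fundamental form with respect to $\nu$. So $\langle \mathrm{Ric}_p(\omega),\omega\rangle$ splits as the sum of the term coming from $\bar R^T$ and the term coming from $A\owedge A$. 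The aim is to show that both terms are nonnegative for $q\le p\le n-q$. By Hodge duality it suffices to treat degrees $i$ with $q\le i\le p$; the degrees $n-p,\dots,n-q$ then follow.

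For the extrinsic part, work at a point in an orthonormal frame of principal directions with principal curvatures $\kappa_1,\dots,\kappa_n$. Then $A\owedge A$ is diagonal on $\Lambda^2$, with eigenvalue $\kappa_i\kappa_j$ on $e_i\wedge e_j$. For a diagonal curvature operator, the Weitzenböck term applied to $\omega=\sum_I\omega_I e_I$ is $\sum_I\omega_I^2\sum_{i\in I,\,j\notin I}\kappa_i\kappa_j$. The inner sum factors as $\big(\sum_{i\in I}\kappa_i\big)\big(\sum_{j\notin I}\kappa_j\big)$. Since $|I|=p\ge q$ and $n-p\ge q$, each factor is at least the sum of the $q$ smallest principal curvatures, up to adding further smallest ones. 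More precisely, $q$-convexity implies $p$- and $(n-p)$-convexity, so both factors are $\ge 0$. If $f$ is strictly $q$-convex at a point, both factors are $>0$ there, so this term is positive definite on nonzero forms at that point.

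For the ambient part, the curvature operator of $\bar R^T$ is the compression of $\bar{\mathcal R}$ to the subspace $\Lambda^2T_x\Sigma\subset\Lambda^2T_{f(x)}M$. By Cauchy interlacing (min–max), the sum of the smallest $\ell$ eigenvalues of the compression is at least the same sum for $\bar{\mathcal R}$. Hence $\bar R^T$ is an $(n-p)$-nonnegative algebraic curvature operator on an $n$-dimensional space, and it is $(n-p)$-positive wherever $\bar{\mathcal R}$ is. Now I would invoke the algebraic lemma of Petersen–Wink. This lemma is pointwise and linear-algebraic, so it applies to any algebraic curvature tensor, not only to a genuine Riemannian curvature tensor. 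It states that $(n-p)$-nonnegativity implies $\mathrm{Ric}_p\ge0$ on $p$-forms for $p\le n/2$, with positivity in the strict case. The step I expect to need the most care is checking that this lemma really applies to $\bar R^T$ in the needed form. In particular, I must confirm that the Bianchi identity holds for $\bar R^T$ (it does), and that the lemma covers all degrees $i$ between $q$ and $p$. It does, since $(n-p)$-nonnegativity implies $(n-i)$-nonnegativity for $i\le p$.

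With $\mathrm{Ric}_i\ge0$ for $q\le i\le p$, integrate $\langle\Delta\omega,\omega\rangle$ for a harmonic $i$-form $\omega$. This gives $\nabla\omega=0$ and $\langle\mathrm{Ric}_i\omega,\omega\rangle\equiv0$, which proves (3). A parallel form is determined by its value at one point, so $b_i\le\binom ni$, and Poincaré duality gives (1). Under the strictness hypotheses of (2), a nonzero parallel form has constant nonzero norm, yet $\mathrm{Ric}_i$ is positive definite at the special point, which is a contradiction; hence the listed Betti numbers vanish. For $q=1$, the case $p=1$ gives $\mathrm{Ric}_\Sigma\ge0$, with strict inequality at some point. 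By Aubin's (Ehrlich's) deformation, $\Sigma$ then carries a metric of positive Ricci curvature, so $\pi_1(\Sigma)$ is finite by Myers. When $n=3$, Hamilton's theorem shows $\Sigma$ is diffeomorphic to a spherical space form.
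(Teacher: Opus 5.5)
Your proposal is correct and follows essentially the same route as the paper. The paper uses the same ingredients: the Gauss-equation splitting $\B^{[i]}=\B^{[i]}_{\rm res}+\B^{[i]}_{\rm ext}$, Savo's diagonalization of the extrinsic term with eigenvalues $K_aK_{\star a}$, the Ky Fan/interlacing plus Petersen--Wink bound for the ambient term, and then the standard Bochner conclusions together with Aubin, Myers and Hamilton. You check directly that both factors are nonnegative for $q\le i\le n-q$, where the paper packages this in Lemma~\ref{prop}; you also rightly read the ``$M$'' in the $q=1$ clause of (2) as $\Sigma$, which is what the argument and Theorem~\ref{mainA0} actually give.
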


As an immediate consequence, we get 

\begin{theorem}\label{mainA0}
Let $n\geq 3$ and $1\leq q\leq p\leq \lfloor \frac{n}{2} \rfloor$ be integers. If $M$ is an $(n+1)$-dimensional 
Riemannian manifold with $(n-p)$-positive curvature operator and $f\colon \Sigma\to M$ a closed, 
connected, oriented, $q$-convex immersed hypersurface, then the Betti numbers of $\Sigma$ satisfy
\[
b_q(\Sigma)=\cdots=b_p(\Sigma)=0 \quad \text{and}\quad b_{n-p}(\Sigma)=\cdots=b_{n-q}(\Sigma)=0.
\]
Moreover, if $q=1$, then $\Sigma$ admits a Riemannian metric of positive Ricci curvature and has finite fundamental group. 
In particular, if $n=3$, then it is diffeomorphic to a spherical space form.
\end{theorem}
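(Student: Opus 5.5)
Theorem~\ref{mainA0} follows directly from Theorem~\ref{mainA1}, which we take as established. Assume the curvature operator of $M$ is $(n-p)$-positive everywhere. Then it is in particular $(n-p)$-nonnegative, so the standing hypotheses of Theorem~\ref{mainA1} hold. Moreover, $(n-p)$-positivity holds at every point of $M$, hence at any point $x\in f(\Sigma)$. Since $\Sigma$ is nonempty, such a point exists, so the first alternative in part (2) of Theorem~\ref{mainA1} is satisfied.

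Part (2) then gives the vanishing of $b_q(\Sigma),\dots,b_p(\Sigma)$ directly. The vanishing of $b_{n-p}(\Sigma),\dots,b_{n-q}(\Sigma)$ follows as well, either because part (2) states it or by Poincar\'e duality $b_i=b_{n-i}$ on the closed oriented manifold $\Sigma$.

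For the case $q=1$, I would be careful about which manifold carries the Ricci-positive metric. Part (2) of Theorem~\ref{mainA1} says ``$M$ admits'', but the argument concerns $\Sigma$, so this is presumably a typo for $\Sigma$. Theorem~\ref{mainA0} correctly says $\Sigma$. If that conclusion is not available verbatim, I would recover it as follows.

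\begin{itemize}
\item When $q=1$, part (2) gives $b_1(\Sigma)=0$.
\item More substantively, the proof of Theorem~\ref{mainA1} presumably shows that the Bochner--Weitzenb\"ock curvature term acting on $1$-forms, namely the Ricci curvature of the induced metric, is nonnegative everywhere and positive somewhere. This would come from the Gauss equation together with $1$-convexity and $(n-p)$-nonnegativity of the ambient curvature operator.
\item Aubin's deformation then yields a metric on $\Sigma$ with positive Ricci curvature.
\item Myers' theorem, applied to the universal cover, shows that $\pi_1(\Sigma)$ is finite.
\item When $n=3$, Hamilton's theorem on $3$-manifolds with positive Ricci curvature shows that $\Sigma$ is diffeomorphic to a spherical space form.
\end{itemize}

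The only potential obstacle is this identification of which manifold the Ricci conclusion refers to. Under global positivity we have strict positivity at every point, so the deformation step is not even needed beyond what Theorem~\ref{mainA1} already provides.
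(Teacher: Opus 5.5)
Your argument is correct and follows the paper's route exactly: the paper states Theorem~\ref{mainA0} as an immediate consequence of Theorem~\ref{mainA1}, since $(n-p)$-positivity supplies both the $(n-p)$-nonnegativity hypothesis and the pointwise strictness needed in part (2). You are also right that the ``$M$'' in Theorem~\ref{mainA1}(2) should read $\Sigma$, because the Ricci and fundamental-group conclusions come from Proposition~\ref{pbochner}(2) applied to $\Sigma$, where $\B^{[1]}$ is the Ricci curvature of $\Sigma$.
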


For $n \ge 3$, the class of $(n+1)$-dimensional Riemannian manifolds with $\ell$-positive curvature operator $(3\leq \ell \leq n)$ is different from the class of manifolds satisfying $\overline{R}_{1313} + \overline{R}_{2323} \geq 0$ for all orthonormal sets $\{e_1,e_2,e_3\}$, that is, having nonnegative second intermediate Ricci curvature $\Ric_2$, defined for unit vectors $v\in TM$ by
\[
    \Ric_2(v) = \inf\{ \sec(v,e_1) + \sec(v,e_2) : \{e_1,e_2\} \text{ orthonormal set in } v^\bot \}
\]

Let us present some further interesting consequences of Theorem \ref{mainA1}. For example, it yields the following ``sphere theorem'' for hypersurfaces.

\begin{corollary}
Let $M$ be an $(n+1)$-dimensional Riemannian manifold with $n \geq 3$ and $\lceil \frac{n}{2} \rceil$-nonnegative curvature operator.
Let $f \colon \Sigma \to M$ be a closed, connected, oriented, convex immersed hypersurface.
If either the curvature operator of $M$ is $\lceil \frac{n}{2} \rceil$-positive at some point $x \in f(\Sigma)$, or $f$ is strictly convex at some point, then $\Sigma$ is a rational homology sphere, admits a Riemannian metric with positive Ricci curvature, and has finite fundamental group. In particular, if $n = 3$, then $\Sigma$ is diffeomorphic to a spherical space form.
\end{corollary}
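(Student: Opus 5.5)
The plan is to apply Theorem~\ref{mainA1} with $q=1$ and $p=\lfloor \frac{n}{2}\rfloor$, and then fill in the remaining Betti numbers with Poincaré duality. Since $n-\lfloor \frac{n}{2}\rfloor=\lceil \frac{n}{2}\rceil$, the hypothesis that the curvature operator of $M$ is $\lceil \frac n2\rceil$-nonnegative is exactly $(n-p)$-nonnegativity. Convexity is $1$-convexity, and the constraints $1\le q\le p\le \lfloor \frac n2\rfloor$ hold because $n\ge 3$ gives $\lfloor \frac n2\rfloor\ge 1$. The extra pointwise hypothesis, either $\lceil\frac n2\rceil$-positivity at some $x\in f(\Sigma)$ or strict convexity at some point, is precisely the hypothesis of part (2) of Theorem~\ref{mainA1}.

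Part (2) then gives $b_1(\Sigma)=\cdots=b_{\lfloor n/2\rfloor}(\Sigma)=0$ and $b_{\lceil n/2\rceil}(\Sigma)=\cdots=b_{n-1}(\Sigma)=0$. These two ranges together cover every index $1\le i\le n-1$. Indeed, if $n$ is even, both ranges contain $n/2$; if $n$ is odd, $\lfloor n/2\rfloor+1=\lceil n/2\rceil$, so the ranges are adjacent. Since $\Sigma$ is closed, connected and oriented, we also have $b_0(\Sigma)=b_n(\Sigma)=1$. Hence $\Sigma$ has the rational homology of $\Sf^n$, that is, it is a rational homology sphere.

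The remaining conclusions are positive Ricci curvature, finite fundamental group, and the spherical space form statement when $n=3$. These come from the "moreover'' clause of part (2) in the case $q=1$. That clause is stated for $M$, but from the proof and from Theorem~\ref{mainA0} it is clearly meant for $\Sigma$. Finiteness of $\pi_1(\Sigma)$ then follows from Myers' theorem applied to the positive Ricci metric on the compact manifold $\Sigma$. For $n=3$, Hamilton's theorem shows that a closed $3$-manifold with positive Ricci curvature is diffeomorphic to a spherical space form.

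No real obstacle is expected: the only points to check are the index arithmetic $n-\lfloor n/2\rfloor=\lceil n/2\rceil$ and that the two vanishing ranges cover all middle degrees.
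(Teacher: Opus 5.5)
Your proposal is correct and is exactly the derivation the paper intends: apply Theorem~\ref{mainA1}(2) with $q=1$ and $p=\lfloor \frac n2\rfloor$ (so that $n-p=\lceil \frac n2\rceil$), check that the two vanishing ranges cover every degree $1\le i\le n-1$, and take the positive Ricci curvature, finite $\pi_1$ and $n=3$ conclusions from the $q=1$ clause. You are also right that the ``$M$'' in that clause of Theorem~\ref{mainA1}(2) is a typo for $\Sigma$, as its proof via Proposition~\ref{pbochner} on $\Sigma$ and the statement of Theorem~\ref{mainA0} make clear.
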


Of course the conclusions of Theorems \ref{mainA1}-\ref{mainA0} do apply also in case the ambient manifold $M$ satisfies the relevant curvature assumption only in a neighbourhood of $f(\Sigma)$. So, for compact Riemannian manifolds with non-empty boundary we get:

\begin{corollary}\label{extsh}
Let $n \geq 3$, and let $M$ be a compact $(n+1)$-dimensional Riemannian manifold with non-empty smooth boundary $\partial M$. 
If the curvature operator of $M$ is $\lceil \frac{n}{2} \rceil$-positive in a neighbourhood of $\partial M$ and $\partial M$ is $q$-convex for some $1\leq q\leq \lfloor \frac{n}{2} \rfloor$, 
then each connected component of $\partial M=M_1\sqcup\dots\sqcup M_k$ 
satisfies
\[
b_q(M_j)=\cdots=b_{n-q}(M_j)=0\quad \text{for all}\quad 1\leq j\leq k.
\]
In particular, if $\partial M$ is convex then each connected component of $\partial M$ is a rational homology sphere with finite fundamental group.
\end{corollary}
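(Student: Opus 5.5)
Corollary \ref{extsh} will follow by applying Theorem \ref{mainA0} to each boundary component separately, with the inclusion $M_j \hookrightarrow M$ as the immersion and $p=\lfloor \frac{n}{2}\rfloor$. The arithmetic is the key observation: $n-\lfloor \frac{n}{2}\rfloor=\lceil \frac{n}{2}\rceil$. Hence $\lceil \frac{n}{2}\rceil$-positivity is exactly the $(n-p)$-positivity required there, and $1\leq q\leq p=\lfloor \frac n2\rfloor$ as required.

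First I check the standing hypotheses of Theorem \ref{mainA0} for each component.
\begin{enumerate}
\item Each $M_j$ is a closed connected hypersurface, being a compact component of the boundary of a compact manifold.
\item Taking $\nu$ to be the inward unit normal along $\partial M$ makes $f$ two-sided, and $M_j$ is then orientable. If $M$ is not assumed oriented, the global normal still orients $M_j$ once we orient $M$ locally near $\partial M$. Alternatively, the Bochner argument of Theorem \ref{mainA1} is pointwise and only uses a unit normal.
\item $q$-convexity of $\partial M$ is understood with respect to $\nu$, the sign convention for convex boundaries.
\end{enumerate}

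Second, I address the fact that $M$ has boundary, so $f(\Sigma)$ lies on $\partial M$ rather than in the interior. As remarked before the corollary, the proof of Theorems \ref{mainA1}--\ref{mainA0} only uses the Gauss equation, Codazzi, and the ambient curvature operator at points of $f(\Sigma)$. To be safe, I extend $M$ slightly across $\partial M$: glue a collar $\partial M\times[0,\varepsilon)$ and extend the metric smoothly. Since $\lceil \frac n2\rceil$-positivity is an open condition and holds on the compact set $\partial M$, it persists on a neighbourhood in the extended manifold. This makes $M_j$ an interior hypersurface of an open manifold.

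Applying Theorem \ref{mainA0} then gives $b_q=\dots=b_p=0$ and $b_{n-p}=\dots=b_{n-q}=0$ for each $M_j$. Since $p=\lfloor \frac n2\rfloor$, we have $n-p=\lceil \frac n2\rceil\le p+1$, so the two ranges cover $q,\dots,n-q$ with no gap. This is the claimed vanishing.

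For $q=1$, the vanishing gives $b_1=\dots=b_{n-1}=0$. Together with $b_0=b_n=1$, this makes $M_j$ a rational homology sphere. The finite fundamental group comes from the $q=1$ clause of Theorem \ref{mainA0}: $M_j$ carries a metric of positive Ricci curvature, and Myers' theorem then applies.

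The only delicate step is the boundary and extension issue, namely making sure that nothing in the earlier theorem genuinely needs a complete ambient manifold. I expect it is harmless, since the argument is entirely local along the hypersurface.
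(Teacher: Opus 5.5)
Your main line is the paper's argument. The corollary is Theorem~\ref{mainA0} with $p=\lfloor \frac n2\rfloor$, applied to each component $M_j$, using that only the ambient curvature along $f(\Sigma)$ enters. The collar extension is harmless, though not needed. Your bookkeeping $n-p=\lceil\frac n2\rceil\le p+1$ is exactly what makes the two vanishing ranges cover $q,\dots,n-q$.

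The step that is wrong is item 2, your orientability claim. A two-sided hypersurface is orientable only if the ambient manifold is orientable near it. A collar $M_j\times[0,\varepsilon)$ is orientable if and only if $M_j$ is, so ``orient $M$ locally near $\partial M$'' is circular. This matters for the final conclusion, which is where you use $b_n(M_j)=1$.

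The vanishing of $b_q,\dots,b_{n-q}$ and the finiteness of $\pi_1$ do survive for non-orientable $M_j$. Apply Theorem~\ref{mainA0} to the orientation double cover $\tilde M_j\to M_j\to M$, which is still two-sided and $q$-convex. Then use that $H^*(M_j;\mathbb{R})$ injects into $H^*(\tilde M_j;\mathbb{R})$ via the transfer.

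The rational homology sphere claim does not survive. Take $n$ even, $M=[-1,1]\times\mathbb{RP}^n$, and the metric $dt^2+(2-\cos\pi t)^2 g_{\mathbb{RP}^n}$. At $t=\pm1$ the curvature operator has eigenvalues $\pi^2/3$ and $1/9$, so it is positive near $\partial M$. The boundary is totally geodesic, hence convex. Yet each boundary component $\mathbb{RP}^n$ is a rational homology point, not a rational homology sphere.

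So orientability of the boundary components must be taken as a hypothesis; the paper gets it from its standing assumption that $\Sigma$ is oriented. It cannot be derived from two-sidedness, as you tried to do.
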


When the average of the smallest $\ell$ eigenvalues of the curvature operator of the ambient manifold is bounded from below by a negative constant,
then we prove the following:

\begin{theorem}\label{mainB1}
Fix integers $n\geq 3$ and $1\leq q\leq p\leq \lfloor \frac{n}{2} \rfloor$ and real numbers $c<0$ and $D>0$.
Let $M$ be an $(n+1)$-dimensional Riemannian manifold such that 
\[
\frac{\lambda_1+\cdots+\lambda_{n-p}}{n-p}\geq c,
\] 
where $\lambda_1\leq \cdots\leq \lambda_{\binom{n+1}{2}}$ denote the 
eigenvalues of its curvature operator. Let $f\colon \Sigma\to M$ be a closed, connected, oriented, $q$-convex immersed hypersurface.
If ${\rm diam}(\Sigma)\leq D$, then there exists a constant $C(n,cD^2)>0$
such that 
$$
b_i(\Sigma)=b_{n-i}(\Sigma)\leq \binom ni \cdot \exp\left( C(n,c D^2)\cdot \sqrt{-cD^2i(n-i)}\right),
$$
for each $q\leq i\leq p$. Moreover, there exists $\varepsilon(n,i)>0$ such that if $cD^2\geq -\varepsilon(n,i)$, then 
$
b_i(\Sigma)=b_{n-i}(\Sigma)\leq \binom{n}{i}.
$ 
\end{theorem}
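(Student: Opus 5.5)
Via Hodge theory, $b_i(\Sigma)$ is the dimension of the space $\mathcal H^i$ of harmonic $i$-forms, and Poincaré duality (or the Hodge star) gives $b_i=b_{n-i}$. So it suffices to bound $\dim\mathcal H^i$ for $q\le i\le p$. I will use the Bochner--Weitzenböck formula $\Delta\omega=\nabla^*\nabla\omega+\mathrm{Ric}^{[i]}(\omega)$ on $\Sigma$, with the curvature term written through the Gauss equation, $R^\Sigma(X,Y,Z,W)=\bar R(X,Y,Z,W)+\langle AX,W\rangle\langle AY,Z\rangle-\langle AX,Z\rangle\langle AY,W\rangle$. The resulting term $\mathrm{Ric}^{[i]}$ splits into an intrinsic ambient part and an extrinsic part. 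Presumably this is the same pointwise estimate behind Theorem~\ref{mainA1}. In an orthonormal frame diagonalizing the shape operator $A$ with principal curvatures $k_1\le\dots\le k_n$, the extrinsic contribution on a basis form $e^{j_1}\wedge\dots\wedge e^{j_i}$ is $\big(\sum_{a\in J}k_a\big)\big(\sum_{b\notin J}k_b\big)$. For $q\le i\le p\le n/2$, $q$-convexity makes both factors nonnegative, since the first is a sum of $i\ge q$ principal curvatures and the second a sum of $n-i\ge q$ of them. Following the Petersen--Wink method, one extends this to the full quadratic form $g(\mathrm{Ric}^{[i]}\omega,\omega)$ and shows that the extrinsic part is $\ge0$.

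The ambient part restricted to $T\Sigma$ is a sum of terms $\bar R(\Xi_\alpha\omega,\Xi_\alpha\omega)$ over $\Lambda^2T\Sigma\subset\Lambda^2T M$. For an $i$-form this is controlled, by the Petersen--Wink lemma, with weight $\min(i,n-i)$. Using $\Lambda^2 T\Sigma\subset\Lambda^2TM$ and the variational characterization of eigenvalues, I expect the estimate
\[
g(\mathrm{Ric}^{[i]}\omega,\omega)\ \ge\ \frac{\lambda_1+\dots+\lambda_{n-p}}{n-p}\, i(n-i)\,|\omega|^2\ \ge\ c\, i(n-i)|\omega|^2 .
\]
The key point is that the Petersen--Wink argument needs only $(n-i)$-nonnegativity of the restricted operator for $i\le n/2$, and $n-i\ge n-p$. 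Hence the average of the smallest $n-i$ eigenvalues dominates the average of the smallest $n-p$. I expect this step to be the main obstacle: verifying that restricting $\bar R$ to the $\binom n2$-dimensional subspace $\Lambda^2T\Sigma$ does not spoil the eigenvalue-average bound (Cauchy interlacing gives that restricted eigenvalues are $\ge$ the ambient ones), and that the extrinsic term really is nonnegative on general forms and not merely on basis forms. For the latter I would write it as $\sum_{J}\omega_J^2(\cdot)$ plus cross terms and use the diagonal structure of $A$.

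With the bound $\mathrm{Ric}^{[i]}\ge c\,i(n-i)$, the harmonic form $\omega$ satisfies $\Delta\tfrac12|\omega|^2\ge|\nabla\omega|^2+c\,i(n-i)|\omega|^2$, and Kato's inequality yields $|\omega|\Delta|\omega|\ge c\,i(n-i)|\omega|^2$. I then invoke the Gallot/Li-type argument used by Petersen--Wink: on a closed $n$-manifold with Ricci curvature bounded below and diameter $\le D$, Sobolev/Moser iteration gives $\sup|\omega|^2\le K\fint|\omega|^2$ with $K=\exp\big(C(n,cD^2)\sqrt{-cD^2 i(n-i)}\big)$. A linear-algebra lemma then gives $\dim\mathcal H^i\le\binom ni K$, since the evaluation map $\mathcal H^i\to\Lambda^iT_x\Sigma$ is controlled in trace. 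One caution is that the Ricci lower bound on $\Sigma$ itself, needed for the Sobolev constant, comes from the case $i=1$ of the same estimate: we have $\mathrm{Ric}_\Sigma\ge c(n-1)$ because $1$-convexity is not assumed, but $H\,k_a-k_a^2$... This needs care. If it fails, I would instead use the diameter bound together with the fact that only subsolution estimates for $|\omega|$ are needed, via Gallot's isoperimetric approach. Finally, for the small-curvature statement: when $cD^2\ge-\varepsilon(n,i)$ one has $K<1+\frac1{\binom ni}$ hence the integer bound $b_i\le\binom ni$, or one argues by compactness as in Petersen--Wink.
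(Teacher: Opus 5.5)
\textbf{There is a genuine gap, at the step you yourself flagged.} Your route is otherwise the paper's:
\begin{itemize}
\item Gauss splitting of the Weitzenb\"ock term.
\item Savo's diagonalization of the extrinsic part $T_A^{[i]}$ in the basis $\theta_{j_1}\wedge\dots\wedge\theta_{j_i}$. There are no cross terms, and the eigenvalues $K_JK_{\star J}$ are $\ge 0$ for $q\le i\le n-q$.
\item Compression of the ambient curvature operator to $\Lambda^2T\Sigma$, interlacing, and the Petersen--Wink lemma, giving $\mathcal{B}^{[i]}\ge c\,i(n-i)$. This is Proposition~\ref{propb} with $q$ replaced by $i$. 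The Petersen--Wink weight is $n-i$, not $\min(i,n-i)$, though you use the correct one afterwards.
\item The Li--Petersen--Wink estimate, Proposition~\ref{peterthm}.
\end{itemize}

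Your Gallot fallback does not close the gap. Proposition~\ref{peterthm} requires $\mathrm{Ric}_\Sigma\ge(n-1)\kappa$ and yields a constant $C(n,\kappa D^2)$. The same is true of any Moser-iteration/Li-lemma argument, including Gallot's isoperimetric estimate. A diameter bound plus the subsolution inequality for $|\omega|$ gives no Sobolev constant.

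For $q=1$ there is no problem: $\mathrm{Ric}_\Sigma=\mathcal{B}^{[1]}\ge(n-1)c$ is the case $i=1$ of your own estimate. For $q\ge 2$ it fails. The degree-one extrinsic eigenvalue $k_1(nH-k_1)$ is negative whenever $k_1<0$, and it is not controlled by $c$ and $D$. For example, boundaries of $\epsilon$-tubes around a unit circle in $\mathbb{R}^{n+1}$, $n\ge 4$, are $2$-convex with uniformly bounded diameter. Yet on the inner side $k_1\approx -1$ and $k_2=\dots=k_n=1/\epsilon$, so $\mathrm{Ric}_\Sigma\approx -(n-1)/\epsilon$.

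\textbf{What this route actually proves.} Lemma~\ref{Bres} together with Lemma~\ref{prop} at $p=1$ gives $\mathrm{Ric}_\Sigma\ge(n-1)\kappa_1$, where $\kappa_1=\min_\Sigma\bigl(c-(q-1)(\tfrac{n}{n-q})^2H^2\bigr)<0$. Proposition~\ref{peterthm} with $\kappa=\kappa_1$ and $\delta=i(n-i)c/\kappa_1\in(0,i(n-i)]$ then yields $b_i\le\binom{n}{i}\exp\bigl(C(n,\kappa_1D^2)\sqrt{-cD^2\,i(n-i)}\bigr)$. The smallness condition must then be imposed on $\kappa_1D^2$ rather than $cD^2$. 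So the constant also depends on $\max_\Sigma H^2D^2$.

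The paper's own proof has exactly the same hole. It says only that $\mathcal{B}^{[i]}\ge i(n-i)c$ by Proposition~\ref{propb} and that the result follows from Proposition~\ref{peterthm}, without checking that proposition's Ricci hypothesis. As written, the version with a constant $C(n,cD^2)$ is justified only for $q=1$, or under the extra assumption $\mathrm{Ric}_\Sigma\ge(n-1)c$.
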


Theorems \ref{mainA1} and \ref{mainB1} naturally raise the

\begin{question}
Can we say anything about $b_i(\Sigma)$ with $i<q\leq \frac{n}{2}$? What about when $q>\frac{n}{2}$?
\end{question}

This is exactly what the next two theorems establish. 

\begin{theorem}\label{mainA2}
Let $n\geq 3$, $1\leq p\leq \lfloor \frac{n}{2} \rfloor$ and $1\leq q\leq n-1$ be integers.
Let $M$ be an $(n+1)$-dimensional Riemannian manifold with $(n-p)$-positive 
curvature operator, and let $f\colon \Sigma\to M$ be a closed, connected, oriented, 
$q$-convex immersed hypersurface. Set
\be\label{constc}
c\coloneqq\min_{x\in f(\Sigma)}\left\{\frac{\lambda_1(x)+\cdots+\lambda_{n-p}(x)}{n-p}\right\},
\ee
where 
$
\lambda_1\leq \cdots\leq \lambda_{\binom{n+1}{2}}
$
denote the eigenvalues of the curvature operator of $M$, and 
\be\label{beta}
\beta_q\coloneqq
\begin{cases}
q-1, & \text{if} \qquad q\leq \frac{n}{2}\\[1.5mm]
n-q,  & \text{if} \qquad q>\frac{n}{2}
\end{cases} 
\ee
If for some integer $1\leq \ell \leq \min\{p,\beta_q\}$ the (normalized) mean curvature satisfies 
\be\label{qpinched1}
H\leq \frac{n-q}{n}\sqrt{\frac{\ell}{q-\ell}}c,
\ee
then the following hold:
\medskip
\begin{enumerate}[topsep=1pt,itemsep=1pt,partopsep=1ex,parsep=0.5ex,leftmargin=*, label={\rm(\arabic*)}, align=left, labelsep=0.4em]
\item $b_i(\Sigma)=b_{n-i}(\Sigma)\leq \binom{n}{i}$, for each $\ell \leq i\leq \min\{p,\beta_q\}.$
\item If strict inequality holds at some point, then 
\[
b_i(\Sigma)=b_{n-i}(\Sigma)=0, \quad \text{for all}\quad  \ell \leq i\leq \min\{p,\beta_q\}.
\]
Moreover, if $\ell=1$, then $\Sigma$ admits a Riemannian metric of positive Ricci curvature, and has finite fundamental group. 
In particular, if $n=3$, then it is diffeomorphic to a spherical space form.
\item If $b_\ell(\Sigma)>0$, then equality holds in \eqref{qpinched1} everywhere on $\Sigma$ and every harmonic 
$\ell$-form is parallel. In particular, $\Sigma$ supports a nontrivial parallel $\ell$-form. 
\end{enumerate}
\end{theorem}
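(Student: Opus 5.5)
We argue with the Bochner technique applied to harmonic forms on $\Sigma$ with the induced metric. For a harmonic $i$-form $\omega$ the Weitzenböck formula reads $\frac12\Delta|\omega|^2=|\nabla\omega|^2+\langle \mathcal{R}_i\omega,\omega\rangle$, where $\mathcal{R}_i$ is the curvature term built from the curvature operator $R^\Sigma$ of $\Sigma$. By the Gauss equation, $R^\Sigma=\bar R|_{\Lambda^2T\Sigma}+A\wedge A$, with $A$ the shape operator. Following Petersen--Wink, we write $\langle\mathcal{R}_i\omega,\omega\rangle=\sum_\alpha |\Xi_\alpha\omega|^2\lambda_\alpha$ in an eigenbasis of $R^\Sigma$, and we bound the two contributions separately.

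\textbf{Ambient contribution.} The restriction of $\bar R$ to $\Lambda^2T\Sigma\subset\Lambda^2TM$ has eigenvalues whose partial sums dominate those of $\bar R$, by Cauchy interlacing. Applying the Petersen--Wink lemma to $i\le p$ then gives
\[
\langle \bar{\mathcal R}_i\omega,\omega\rangle\ \ge\ i(n-i)\,c\,|\omega|^2 .
\]
Here we use that $(n-p)$-averages bounded below by $c$ control the weighted sum, since the weights $|\Xi_\alpha\omega|^2$ are bounded by $|\omega|^2$ and sum to $i(n-i)|\omega|^2$.

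\textbf{Extrinsic contribution.} Diagonalise $A$ with principal curvatures $k_1\le\dots\le k_n$. The term $A\wedge A$ contributes
\[
\sum_{I,\,j\in I,\,m\notin I} k_jk_m\,\omega_I^2
\quad\text{(Bochner formula for hypersurfaces, as in the $q$-convex case of Theorem~\ref{mainA1})},
\]
and this equals $\sum_I\big(\sum_{j\in I}k_j\big)\big(\sum_{m\notin I}k_m\big)\omega_I^2$. In Theorem~\ref{mainA1} both factors are $\ge0$ by $q$-convexity when $q\le i\le n-q$. Now instead $i<q$ is allowed, so the factor $\sum_{j\in I}k_j$ over $i$ indices may be negative. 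We must bound it from below using $q$-convexity and the total $nH$.

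The main obstacle is an elementary but sharp inequality. For reals with the sum of the smallest $q$ nonnegative and total sum $nH$, we need, for $|I|=\ell\le\beta_q$,
\[
\Big(\sum_{I}k_j\Big)\Big(nH-\sum_I k_j\Big)\ \ge\ -\,\ell(n-\ell)\cdot\frac{n^2}{(n-q)^2}\frac{q-\ell}{\ell}H^2\cdot\kappa ,
\]
with $\kappa$ a normalising factor. Combined with the hypothesis $H\le \frac{n-q}{n}\sqrt{\frac{\ell}{q-\ell}}\,c$, this makes the negative part at most $\ell(n-\ell)c|\omega|^2$, so it is absorbed by the ambient term. Since $c>0$ by $(n-p)$-positivity, we have $H\ge0$, so the hypothesis bounds $H^2$.

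We would prove the inequality by extremising. Write $s=\sum_I k_j$. The case $s\ge 0$ is trivial. If $s<0$, then by $q$-convexity the remaining $q-\ell$ smallest values must compensate, giving $s\ge -\frac{\ell}{q-\ell}\cdot(\text{their sum})$. Their sum is at most $\frac{q-\ell}{n-\ell}(nH-s)$, since they are the smallest among the complement. Optimising over the configuration (all $k_j$ equal on $I$, equal on the next $q-\ell$, equal on the rest) yields the constant. The condition $\ell\le\beta_q$, with the $q>\frac n2$ case handled by passing to complementary indices $n-i$ via Hodge star, ensures the extremal configuration is admissible. Getting exactly $\frac{n-q}{n}\sqrt{\ell/(q-\ell)}$ is where the care lies. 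We would also check monotonicity in $i$, so that the estimate for $\ell$ propagates to all $\ell\le i\le\min\{p,\beta_q\}$.

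\textbf{Conclusions.} With $\langle\mathcal R_i\omega,\omega\rangle\ge0$, integration gives $\nabla\omega=0$, so harmonic forms are parallel. Hence $b_i\le\binom ni$, determined by the value at a point, and Poincaré duality gives $b_{n-i}=b_i$; this is (1). Strict inequality at a point forces $\omega=0$ there, so $\omega\equiv0$, giving (2). For $\ell=1$ the same pinching yields $\Ric\ge0$, and $\Ric>0$ somewhere; then Aubin/Ehrhart deformation gives a metric of positive Ricci curvature, Myers gives finite $\pi_1$, and Hamilton covers $n=3$. For (3), a nonzero parallel $\ell$-form forces $\langle\mathcal R_\ell\omega,\omega\rangle=0$ everywhere. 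By (2) the strict inequality then fails at every point, so equality holds in \eqref{qpinched1} everywhere.
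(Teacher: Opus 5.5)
Your architecture matches the paper's. You split $\mathcal{B}^{[\ell]}=\mathcal{B}^{[\ell]}_{\rm res}+T_A^{[\ell]}$ by the Gauss equation, bound the ambient part by $\ell(n-\ell)c$ via interlacing and Petersen--Wink, use Savo's eigenvalues $K_aK_{\star a}$ for $T_A^{[\ell]}$, and finish with the Bochner argument of Proposition~\ref{pbochner}.

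However, the one new ingredient, the sharp lower bound for $\min_a K_aK_{\star a}$ (Lemma~\ref{prop}), is not established. Your step ``$s\ge-\frac{\ell}{q-\ell}\cdot(\text{their sum})$'' is false: take $\ell=1$, $q=3$, $k_1=-2$, $k_2=k_3=1$. $q$-convexity only gives $s\ge-(k_{\ell+1}+\dots+k_q)$. Your target inequality also carries an undefined factor $\kappa$, and the constant is asserted rather than derived.

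Your own second observation repairs this. From $-s\le k_{\ell+1}+\dots+k_q\le\frac{q-\ell}{n-\ell}(nH-s)$ you get $|s|\le\frac{q-\ell}{n-q}\,nH$. Hence $s(nH-s)=-|s|(nH+|s|)\ge-\frac{(q-\ell)(n-\ell)}{(n-q)^2}n^2H^2$, so $\kappa=1$ and no optimisation over configurations is needed.

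You must also reduce an arbitrary $I$ to $\{1,\dots,\ell\}$. If $K_I<0$, then $K_{\{1,\dots,\ell\}}\le K_I<0$, and $t\mapsto t(nH-t)$ is increasing for $t<0$. If $K_I\ge0$, then $K_{\star I}\ge0$, because $|\star I|=n-\ell\ge q$ and $k_q\ge0$. This is exactly where $\ell\le n-q$ enters, and no Hodge star is needed when $q>\frac n2$. A minor slip: the Petersen--Wink weights satisfy $|L\omega|^2\le i\,|\omega|^2|L|^2$, not $\le|\omega|^2|L|^2$; this is why $n-i$ eigenvalues appear.

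The absorption step also fails as written. With $\kappa=1$, hypothesis \eqref{qpinched1} as printed gives $\frac{q-\ell}{\ell}\frac{n^2}{(n-q)^2}H^2\le c^2$. So the negative part is at most $\ell(n-\ell)c^2|\omega|^2$, not $\ell(n-\ell)c|\omega|^2$, and you only get $\mathcal{B}^{[\ell]}\ge\ell(n-\ell)(c-c^2)$. This is useless for $c>1$.

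No choice of $\kappa$ fixes this, because \eqref{qpinched1} is not scale invariant ($H$ scales like $\sqrt c$). In fact, for $c>1$ the printed statement is false. Rescale $\mathbb T^n_\ell(\sqrt{\ell/q})$ into the round sphere of curvature $c>1$, whose curvature operator is $c\,\mathrm{Id}$ (e.g.\ $n=4$, $p=q=2$, $\ell=1$). It is $q$-convex with $H=\frac{n-q}{n}\sqrt{\frac{\ell c}{q-\ell}}$, strictly below $\frac{n-q}{n}\sqrt{\frac{\ell}{q-\ell}}\,c$. Yet $b_\ell=1$, and $\pi_1=\mathbb Z$ when $\ell=1$.

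The correct hypothesis is $H\le\frac{n-q}{n}\sqrt{\frac{\ell c}{q-\ell}}$, which is precisely when the right-hand side of \eqref{boch2} is nonnegative. The paper's one-line deduction ``by Proposition~\ref{propb} and \eqref{qpinched1}, $\mathcal{B}^{[i]}\ge0$'' is valid only in this form. So the printed $c$ should be read as $\sqrt c$; the two agree for $c=1$, the case of Section~\ref{amb-sphere}.

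With that correction and your monotonicity remark ($i\mapsto\frac{i}{q-i}$ is increasing, and $(n-i)$-averages dominate $(n-p)$-averages for $i\le p$), your steps (1)--(3) go through as in the paper. Deducing (3) as the contrapositive of (2) is fine.
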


We emphasize that inequality \eqref{qpinched1} is sharp; see Remarks \ref{remsph}(4). Observe also that Theorem \ref{mainA2} complements Theorem \ref{mainA0}. As an immediate consequence of Theorem \ref{mainA2}, we obtain the following ``sphere theorem'' for pinched hypersurfaces.

\begin{corollary}\label{corsh}
Let $n\geq 3$ and let $M$ be an $(n+1)$-dimensional Riemannian manifold with $\lceil \frac{n}{2} \rceil$-positive 
curvature operator. If $f\colon \Sigma \to M$ is a closed, connected, oriented, $\lceil \frac{n}{2} \rceil$-convex immersed 
hypersurface such that 
\be\label{sharp}
H\leq \frac{n-\lceil \frac{n}{2} \rceil}{n\sqrt{\lceil \frac{n}{2} \rceil-1}}c,
\ee
where $c$ is defined by \eqref{constc}, and the inequality is strict at some point, then $\Sigma$ is a rational homology sphere, 
admits a Riemannian metric of positive Ricci curvature, and has finite fundamental group. 
In particular, if $n=3$, then it is diffeomorphic to a spherical space form.
\end{corollary}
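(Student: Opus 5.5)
Corollary~\ref{corsh} follows from Theorem~\ref{mainA2} by a specific choice of parameters, combined with Poincaré duality.

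Set $q=p=\lceil \frac{n}{2}\rceil$ and $\ell=1$. First we need $p\le\lfloor\frac n2\rfloor$, whereas Theorem~\ref{mainA2} uses $(n-p)$-positivity with $n-p=n-\lceil\frac n2\rceil=\lfloor\frac n2\rfloor$. We therefore choose instead $p=\lfloor\frac n2\rfloor$, so that $n-p=\lceil\frac n2\rceil$, which is exactly the hypothesis of the corollary. The constant $c$ in \eqref{constc} is then the minimum of the average of the smallest $\lceil\frac n2\rceil$ eigenvalues, and it is positive by $\lceil \frac n2\rceil$-positivity and compactness of $\Sigma$. Next, with $q=\lceil\frac n2\rceil$ we compute $\beta_q$ from \eqref{beta}:
\begin{itemize}
\item if $n$ is even, then $q=\frac n2\le\frac n2$, so $\beta_q=q-1=\frac n2-1$;
\item if $n$ is odd, then $q=\frac{n+1}2>\frac n2$, so $\beta_q=n-q=\frac{n-1}2$.
\end{itemize}
In both cases $\min\{p,\beta_q\}=\beta_q\ge 1$ because $n\ge3$, so $\ell=1$ is admissible. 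With $\ell=1$ the right-hand side of \eqref{qpinched1} equals $\frac{n-q}{n}\frac{c}{\sqrt{q-1}}$, which is precisely \eqref{sharp}. Since strict inequality holds at some point, Theorem~\ref{mainA2}(2) yields $b_i(\Sigma)=b_{n-i}(\Sigma)=0$ for $1\le i\le\beta_q$, and also that $\Sigma$ carries a metric of positive Ricci curvature, has finite fundamental group, and is a spherical space form when $n=3$.

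It remains to show that all Betti numbers $b_i$ with $1\le i\le n-1$ vanish. For odd $n$, the indices $1\le i\le\frac{n-1}2$ together with their duals $n-i$ cover every $i$ from $1$ to $n-1$. For even $n$, the two ranges $1\le i\le\frac n2-1$ and $\frac n2+1\le i\le n-1$ leave the middle degree $i=\frac n2$ uncovered. This middle Betti number is the main obstacle.

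To handle it, I would reapply the vanishing part of Theorem~\ref{mainA0}/\ref{mainA1}. Since $q=\frac n2=p$ and the curvature operator is $\frac n2$-positive, which means $(n-p)$-positive, those results give $b_{n/2}=0$, provided the hypersurface is $\frac n2$-convex. It is, by hypothesis. Alternatively, Theorem~\ref{mainA1}(2) applies with $q=p=\frac n2$, since $n-p=\frac n2$. Hence $b_{n/2}(\Sigma)=0$. Combining all degrees, $\Sigma$ is a rational homology sphere.
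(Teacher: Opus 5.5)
Your reduction is the one the paper intends when it calls the corollary an immediate consequence of Theorem~\ref{mainA2}. You take $p=\lfloor n/2\rfloor$ (so $n-p=\lceil n/2\rceil$), $q=\lceil n/2\rceil$ and $\ell=1$, check that $\min\{p,\beta_q\}=\beta_q\ge 1$, and finish with Poincar\'e duality. You are also right that for even $n$ this leaves $b_{n/2}$ untouched. Closing that degree with Theorem~\ref{mainA0} at $q=p=n/2$, which uses no pinching, is exactly the step the paper leaves implicit.

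There is, however, a problem you inherit by using Theorem~\ref{mainA2} as a black box. The pinching \eqref{qpinched1}, and hence \eqref{sharp}, is not scale invariant: $H$ scales like $\mathrm{length}^{-1}$, while $c$ scales like $\mathrm{length}^{-2}$. What Proposition~\ref{propb} actually gives for $\ell=1$ is
\[
\mathcal{B}^{[1]}\;\ge\;(n-1)\Big(c-(q-1)\Big(\frac{n}{n-q}\Big)^{2}H^{2}\Big).
\]
So $\mathcal{B}^{[1]}\ge 0$ requires $H\le \frac{n-q}{n\sqrt{q-1}}\sqrt{c}$, and \eqref{sharp} implies this only when $c\le 1$.

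For $c>1$ the corollary as written is false. Take $n=3$ and $M=\mathbb{S}^4(1/2)$, so $c=4$. The product $\mathbb{S}^1(\tfrac{1}{2\sqrt2})\times\mathbb{S}^2(\tfrac{1}{2\sqrt2})$ has principal curvatures $-2,2,2$. It is therefore $2$-convex with $H=2/3<4/3=c/3$ at every point, yet $\pi_1\cong\mathbb{Z}$ and $b_1=1$. More generally, shrinking the unit-sphere example of Remarks~\ref{remsph}(5) by any factor $<1$ gives a counterexample in every dimension. The error is invisible in the unit sphere because there $c=1=\sqrt{c}$.

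Your argument is correct once $c$ is replaced by $\sqrt{c}$ in \eqref{sharp}, and correspondingly in \eqref{qpinched1}. It is also correct under the extra assumption $c\le 1$. Your middle-degree step is unaffected, since it does not use the pinching.
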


We emphasize once again that inequality \eqref{sharp} is sharp; see Remarks \ref{remsph}(5). When the average of the smallest $(n-p)$ eigenvalues of the curvature operator of the ambient Riemannian manifold is bounded from below by a non-positive constant, we obtain the following result.

\begin{theorem}\label{mainB2}
Fix integers $n\geq 3,\; 1\leq p\leq \lfloor \frac{n}{2} \rfloor,\; 1\leq q\leq n-1$ and real numbers $c\leq 0$ and $D>0$.
Let $M$ be an $(n+1)$-dimensional Riemannian manifold such that 
\[
\frac{\lambda_1+\cdots+\lambda_{n-p}}{n-p}\geq c,
\] 
where $\lambda_1\leq \cdots\leq \lambda_{\binom{n+1}{2}}$ denote the 
eigenvalues of its curvature operator. Let $f\colon \Sigma\to M$ be a closed, connected, oriented, $q$-convex immersed hypersurface
with ${\rm diam}(\Sigma)\leq D$. Set $\beta_q$ as in \eqref{beta}. Then 
for each $1\leq \ell \leq \min\{p,\beta_q\}$ there exists a constant $C(n,\kappa_\ell D^2)>0$, where 
$$
\kappa_\ell=\min_{x\in \Sigma} \left(c -\frac{q-\ell}{\ell}\left(\frac{n}{n-q}\right)^2 H^2\right),
$$ 
such that 
$$
b_\ell(\Sigma)=b_{n-\ell}(\Sigma)\leq \binom{n}{\ell} \cdot \exp\left( C(n,\kappa_\ell D^2)\cdot \sqrt{-\kappa_\ell D^2 \ell(n-\ell)}\right).
$$
Moreover, there exists $\varepsilon(n,\ell)>0$ such that if $\kappa_\ell D^2\geq -\varepsilon(n,\ell)$, then
$
b_\ell(\Sigma)=b_{n-\ell}(\Sigma)\leq \binom{n}{\ell}.
$ 
Furthermore, if $q\leq p$, then there exists a constant $C(n,cD^2)>0$ such that 
\[
b_i(\Sigma)=b_{n-i}(\Sigma)\leq \binom ni \cdot \exp\left( C(n,c D^2)\cdot \sqrt{-cD^2i(n-i)}\right), 
\]
for each  $q\leq i\leq p$. Moreover, there exists $\varepsilon(n,i)>0$ such that if $cD^2\geq -\varepsilon(n,i)$, then
$
b_i(\Sigma)=b_{n-i}(\Sigma)\leq \binom{n}{i}.
$ 
\end{theorem}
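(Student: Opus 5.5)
The plan is to run the Bochner technique on harmonic $\ell$-forms and $i$-forms of $\Sigma$, controlling the curvature term from below by a negative constant. Then we invoke the Petersen--Wink type estimate for Betti numbers under a lower bound on the Bochner curvature term together with a diameter bound. In that estimate the bound $\kappa D^2$ enters through a Li/Gallot-type Sobolev or heat-kernel argument, which produces $\binom nk\exp(C\sqrt{-\kappa D^2 k(n-k)})$ and the $\varepsilon$-almost-nonnegative statement.

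\textbf{Step 1 (Gauss equation).} For a $k$-form $\omega$ on $\Sigma$, write the Weitzenböck curvature term $\mathrm{Ric}(\omega)$ using the Gauss equation $R_\Sigma = \bar R|_{T\Sigma} + A\wedge A$. This splits $\langle \mathrm{Ric}(\omega),\omega\rangle$ into an ambient part and an extrinsic part.

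\textbf{Step 2 (ambient part).} The intrinsic curvature operator of $\Sigma$ restricted from $\bar R$ is a compression of $\bar R$ to $\Lambda^2T\Sigma$. By Cauchy interlacing, the average of its smallest $n-p$ eigenvalues is still at least $c$. By the Petersen--Wink lemma, for $k\le p$ (so $k(n-k)$ weights fit) the ambient contribution is then $\ge k(n-k)c|\omega|^2$.

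\textbf{Step 3 (extrinsic part).} In a principal frame, the $A\wedge A$ contribution equals $\sum_I\bigl(\sum_{j\in I}\mu_j\bigr)\bigl(\sum_{j\notin I}\mu_j\bigr)\omega_I^2$, where $\mu_j$ are the principal curvatures.
\begin{itemize}
\item For $q\le k\le p$: both $I$ and its complement contain at least $q$ indices, so $q$-convexity makes both factors nonnegative. This gives $\mathrm{Ric}(\omega)\ge k(n-k)c$ and the second part of the theorem.
\item For $k=\ell<q$: we need the algebraic lower bound
\[
\Bigl(\sum_{I}\mu\Bigr)\Bigl(\sum_{I^c}\mu\Bigr)\ \ge\ -\ell(n-\ell)\frac{q-\ell}{\ell}\Bigl(\frac{n}{n-q}\Bigr)^2H^2
\]
under $q$-convexity, with $\ell\le\beta_q$. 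This is exactly the inequality already used (with $c>0$) in Theorem~\ref{mainA2}, so it can be taken from that proof.
\end{itemize}
Combining gives $\mathrm{Ric}(\omega)\ge \ell(n-\ell)\kappa_\ell|\omega|^2$ pointwise, with $\kappa_\ell\le c\le0$.

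\textbf{Step 4 (Betti bounds).} Apply the Bochner-plus-diameter estimate (Petersen--Wink, Theorem~\ref{mainB1}'s mechanism) with $\kappa=\kappa_\ell$ and $\kappa=c$ respectively. Poincaré duality gives $b_{n-k}=b_k$.

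\textbf{Main obstacle.} The main difficulty is the sharp algebraic inequality in Step 3. One can minimize the product over the $\mu_j$ with fixed $nH$ and the $q$-convexity constraint. The minimum occurs when the $\ell$ indices of $I$ carry equal negative values, $q-\ell$ further ones carry compensating positive values making the $q$ smallest sum zero, and the rest are equal. A Lagrange/convexity argument then yields the constant $\frac{q-\ell}{\ell}(\frac{n}{n-q})^2$. If this was not isolated as a lemma earlier, I would prove it directly this way, handling the cases $q\le n/2$ and $q>n/2$ via $\beta_q$.
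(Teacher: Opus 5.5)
Your route is the paper's route. You use the Gauss-equation splitting $\B^{[k]}=\B^{[k]}_{\rm res}+T_A^{[k]}$, and interlacing plus the Petersen--Wink lemma for the ambient part (Lemma~\ref{Bres}). For the extrinsic part you use Savo's eigenvalues $K_aK_{\star a}$: both factors are nonnegative for $q\le k\le p$, and Lemma~\ref{prop} handles $\ell\le\beta_q$. Together these are Proposition~\ref{propb}; you then finish with Proposition~\ref{peterthm} and Poincar\'e duality.

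The step that is not justified is Step~4, and the paper's short proof skips it equally. Proposition~\ref{peterthm} needs more than $\B^{[k]}\ge\delta\kappa$ and ${\rm diam}(\Sigma)\le D$: it also requires $\mathrm{Ric}_\Sigma\ge(n-1)\kappa$. That Ricci bound is what controls the Sobolev constant in the Li--Gallot argument you describe. In Petersen--Wink's intrinsic setting it is automatic, since the average of the smallest $n-1\ge n-p$ eigenvalues is $\ge\kappa$. On a hypersurface it is not: $\mathrm{Ric}_\Sigma(e_j,e_j)$ contains the extrinsic term $k_j(nH-k_j)$. For $q\ge 2$ this term can be arbitrarily negative with $c$ and $D$ fixed, since one may have $k_j<0$ and $H$ large (e.g.\ thin tubes around closed curves).

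For the $\ell$-estimate you can close this gap. Proposition~\ref{propb} with $\ell=1$ gives $\mathrm{Ric}_\Sigma=\B^{[1]}\ge(n-1)\kappa_1$. Since $c\le 0$, one has $\kappa_1\ge m\kappa_\ell$ with $m=\frac{\ell(q-1)}{q-\ell}\ge 1$. Now apply Proposition~\ref{peterthm} with $\kappa=m\kappa_\ell$ and $\delta=\ell(n-\ell)/m$. This gives exactly $\exp\bigl(C(n,m\kappa_\ell D^2)\sqrt{-\kappa_\ell D^2\ell(n-\ell)}\bigr)$. Taking the worst case over the finitely many possible $m$, the constants are still of the form $C(n,\kappa_\ell D^2)$ and $\varepsilon(n,\ell)$.

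For the range $q\le i\le p$ with $q\ge 2$, the same device only gives $\mathrm{Ric}_\Sigma\ge(n-1)\kappa_1$, and $\kappa_1$ involves $\max_\Sigma H$. So the argument produces a constant $C(n,\kappa_1 D^2)$ and a smallness condition on $\kappa_1D^2$. It does not give the claimed $C(n,cD^2)$ or the condition $cD^2\ge-\varepsilon(n,i)$. Your Step~4 goes through as written in only two cases. One is $q=1$, where convexity gives $T_A^{[1]}\ge 0$ and hence $\mathrm{Ric}_\Sigma\ge(n-1)c$. The other is $c=0$, where Proposition~\ref{pbochner}(1) suffices. Otherwise you need an extra Ricci hypothesis on $\Sigma$, or you must accept a constant depending on $H$ for that part.
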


If inequality \eqref{qpinched1} is not satisfied for some $1\leq \ell \leq \min\{p,\beta_q\}$ in Theorem \ref{mainA2}, then one can still derive upper bounds for the corresponding Betti number in the spirit of Theorem \ref{mainB2}.

The structure of the paper is as follows. In Sections \ref{auxres} and \ref{bwoper}, we provide the necessary background material and establish several auxiliary results. The proofs of the main theorems are given in Section \ref{mainproofs}. Finally, in Section \ref{amb-sphere}, we examine the special case in which the ambient space is the unit sphere.

\section{An auxiliary result}\label{auxres}

We start with some algebraic preliminaries inspired by Savo \cite{Savo14}.
Let $V$ be a real $n$-dimensional vector space, $n\geq 3$, equipped with a positive definite 
inner product denoted by $\langle\cdot,\cdot\rangle$. We denote by $\mathrm{End}(V)$ 
the set of all endomorphisms of $V$ and by $\Lambda^p V^*$ the $\binom np$-dimensional 
real vector space, defined as the $p$-th exterior power of the dual vector space $V^*=\Hom(V,\R)$ of $V$. 
To each self-adjoint $A\in \End(V)$, we associate the endomorphism $T_A^{[p]}\in\End(\Lambda^p V^*)$ defined by 
$$
T_A^{[p]}=(\tr A)A^{[p]}-A^{[p]}\circ A^{[p]},
$$
where $A^{[p]}\in \End(\Lambda^p V^*)$ is given by 
$$
A^{[p]}\omega(v_1,\dots,v_p)=\sum_{i=1}^p \omega(v_1,\dots,Av_i,\dots,v_p),
$$
with $\omega\in \Lambda^p V^*$ and $v_1,\dots,v_p\in V$. The endomorphism $T_A^{[p]}$ is self-adjoint 
with respect to the natural inner product $\<\cdot, \cdot \>$ in $\Lambda^p V^*$. 
For every integer $1\leq p\leq n$, we consider the set $I_p$ of $p$-multi-indices 
$
I_p=\left\{\{i_1,\dots,i_p\} : 1\leq i_1<\cdots<i_p\leq n\right\}.
$
Let $A\in \mathrm{End}(V)$ be self-adjoint with eigenvalues $k_1\leq k_2\leq \cdots\leq k_n$. 
Before we proceed, we recall the following elementary fact from linear algebra:
\begin{lemma}\label{basic_lem}
For any integer $1\leq p\leq n$, the following holds:
    \[
       \sum_{i=1}^p \langle Av_i,v_i\rangle\geq  k_1 + \dots + k_p 
    \]
for all orthonormal sets $\{v_1,\dots,v_p\}\subseteq V$.
\end{lemma}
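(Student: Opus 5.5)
We expand in an orthonormal eigenbasis $\{e_1,\dots,e_n\}$ of the self-adjoint endomorphism $A$, with $Ae_j=k_je_j$ and $k_1\le\cdots\le k_n$. Write each $v_i=\sum_j a_{ij}e_j$. Then
\[
\sum_{i=1}^p\langle Av_i,v_i\rangle=\sum_{j=1}^n k_j\,c_j,\qquad c_j\coloneqq\sum_{i=1}^p a_{ij}^2 .
\]
Thus the problem reduces to a linear optimization over the possible weight vectors $(c_1,\dots,c_n)$.

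Next we identify the constraints on the weights. Since the $v_i$ are orthonormal, $\sum_j c_j=\sum_i|v_i|^2=p$. Also $c_j=|P e_j|^2$, where $P$ is the orthogonal projection onto $\mathrm{span}\{v_1,\dots,v_p\}$. Indeed, $\langle e_j,v_i\rangle=a_{ij}$, so $Pe_j=\sum_i a_{ij}v_i$. Hence $0\le c_j\le |e_j|^2=1$ for every $j$.

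It remains to minimize $\sum_j k_jc_j$ subject to $0\le c_j\le1$ and $\sum_j c_j=p$. We compare with the choice $c_j=1$ for $j\le p$ and $c_j=0$ otherwise:
\[
\sum_j k_jc_j-\sum_{j=1}^p k_j=\sum_{j>p}k_jc_j-\sum_{j\le p}k_j(1-c_j)\ge k_p\Big(\sum_{j>p}c_j-\sum_{j\le p}(1-c_j)\Big)=0 .
\]
Here the inequality uses $k_j\ge k_p$ for $j>p$ together with $c_j\ge0$, and $k_j\le k_p$ for $j\le p$ together with $1-c_j\ge0$. The final bracket vanishes precisely because $\sum_j c_j=p$. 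This yields the claimed inequality.

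The only step that needs genuine care is the bound $c_j\le1$, which comes from the projection interpretation; the rest is a rearrangement argument. Alternatively, one could cite the Ky Fan minimum principle, or argue via Courant–Fischer, but the direct computation above is self-contained.
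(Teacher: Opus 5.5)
Your proof is correct. The paper gives no proof of this lemma; it only recalls it as an elementary linear-algebra fact (Ky Fan's minimum principle). So your argument is a self-contained replacement for a citation, not an alternative to a written proof. Each step holds:
\begin{itemize}
\item the reduction to $\sum_j k_j c_j$ with $c_j=|Pe_j|^2$;
\item the constraints $0\le c_j\le 1$ (Bessel) and $\sum_j c_j=p$;
\item the comparison of both partial sums against $k_p$.
\end{itemize}
The degenerate case $p=n$, where every $c_j=1$, is also covered.

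Your version also gives exactly what the paper needs in Lemma~\ref{Bres}. Apply your inequality to $\tilde{\mathfrak{R}}$ on $\Lambda^2T_{f(x)}M$, taking the $v_i$ to be orthonormal eigenvectors of $\mathfrak{R}_{\rm res}$ inside the isometrically embedded subspace $\Lambda^2T_x\Sigma$. Since $\langle\mathfrak{R}_{\rm res}v_i,v_i\rangle=\langle\tilde{\mathfrak{R}}v_i,v_i\rangle$ there, this yields $\lambda_1+\cdots+\lambda_{n-p}\ge\tilde\lambda_1+\cdots+\tilde\lambda_{n-p}$.
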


\noindent For each $a=\{i_1,\dots,i_p\}\in I_p$, the associated $p$-{\it algebraic curvature} $K_a$ of $A$ is
the number given by
$
K_a= k_{i_1}+\cdots+k_{i_p}.
$
It follows from \cite[Lemma 2]{Savo14} that the $\binom np$ eigenvalues $\lambda_a(T_A^{[p]})$ of $\Tp$ 
are given by 
\begin{equation}\label{lambda}
\lambda_a(T_A^{[p]})=K_a K_{\star a},\; \; a=\{i_1,\dots,i_p\}\in I_p,
\end{equation}
where $\star a \in I_p$ is defined by $\star a=\{1,\dots,n\}\smallsetminus\{i_1,\dots,i_p\}$. We say that $A$ is 
$q$-nonnegative if $k_1+\cdots+k_q\geq 0$.

\begin{lemma}\label{prop}
Let $A\in \End(V)$ be self-adjoint and $q$-nonnegative for some integer $1\leq q\leq n-1$. 
Then, for each integer $1\leq p\leq \min\{q,n-q\}$ the lowest eigenvalue 
of $T_A^{[p]}\in \End(\Lambda^p V^*)$ satisfies
\begin{equation} \label{T_q-conv}
    \min_{a\in I_p} \lambda_a(T_A^{[p]})  \geq - \frac{(n-p)(q-p)}{(n-q)^2} (\tr A)^2 
\end{equation}
and inequality is strict in \eqref{T_q-conv} if $A$ is $q$-positive. Moreover, if $A\neq 0$ 
and equality holds in \eqref{T_q-conv} then the eigenvalues $k_1\leq\dots\leq k_n$ of $A$ satisfy
\[
    k_1 + \dots + k_p = - \frac{q-p}{n-q} \tr A \leq 0 \qquad \text{and} \qquad k_{p+1} = \dots = k_n = \frac{\tr A}{n-q} \geq 0.
\]
\end{lemma}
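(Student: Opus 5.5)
The plan is to work directly with the explicit eigenvalues \eqref{lambda}, $\lambda_a(T_A^{[p]})=K_aK_{\star a}$. Set $T=\tr A$ and $\sigma_m=k_1+\dots+k_m$. Since $K_a+K_{\star a}=T$, each eigenvalue equals $g(s)=s(T-s)$ with $s=K_a$. So it suffices to locate $s$ in a suitable interval and use that $g$ is concave, hence minimized at an endpoint.

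\emph{Step 1: ordering facts.} Since the $k_i$ are nondecreasing, $m\mapsto \sigma_m/m$ is nondecreasing. Hence $q$-nonnegativity ($\sigma_q\ge0$) gives $\sigma_m\ge0$ for all $m\ge q$, and in particular $T=\sigma_n\ge0$. Since $p\le n-q$, we have $n-p\ge q$, so $\sigma_{n-p}\ge0$. Therefore
\[
K_a\le T-\sigma_{n-p}\le T .
\]

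\emph{Step 2: key lower bound.} Write $\sigma_q=\sigma_p+(k_{p+1}+\dots+k_q)\ge0$. The numbers $k_{p+1},\dots,k_q$ are the $q-p$ smallest among $k_{p+1},\dots,k_n$, so they are at most the average of those $n-p$ numbers:
\[
k_{p+1}+\dots+k_q\le \tfrac{q-p}{n-p}(T-\sigma_p).
\]
Combining the two inequalities and rearranging gives
\[
K_a\ge\sigma_p\ge -\tfrac{q-p}{n-q}\,T=: -\mu .
\]
Thus $s\in[-\mu,T]$. By concavity of $g$, its minimum on this interval is the smaller of $g(T)=0$ and
\[
g(-\mu)=-\mu(T+\mu)=-\frac{(q-p)(n-p)}{(n-q)^2}T^2 ,
\]
and the latter is $\le 0$. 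This proves \eqref{T_q-conv}.

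\emph{Step 3: strictness and equality.} If $A$ is $q$-positive, then $\sigma_q>0$. This makes Step 2 strict, $\sigma_p>-\mu$, and also gives $T>0$ and $\sigma_{n-p}>0$, so $s$ lies in the open interval $(-\mu,T)$.
\begin{itemize}
\item If $q>p$, then $\mu>0$, so the minimum of $g$ over the closed interval is attained only at $s=-\mu$. Since $s\neq-\mu$, we get $g(s)>g(-\mu)$.
\item If $q=p$, the bound is $0$. Here $\sigma_p>0$ directly, so $K_a\ge\sigma_p>0$ and $K_{\star a}\ge\sigma_{n-p}>0$, giving a positive product.
\end{itemize}

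For the equality case with $A\neq0$, first suppose $q>p$. If some $\lambda_a$ equals $-\mu(T+\mu)$ and this value is negative (that is, $T>0$), concavity forces $K_a=\sigma_p=-\mu$. Then every inequality in Step 2 is an equality: $\sigma_q=0$, and $k_{p+1},\dots,k_q$ equal the average of $k_{p+1},\dots,k_n$, which forces $k_{p+1}=\dots=k_n$. Their common value is $(T-\sigma_p)/(n-p)=T/(n-q)$, which yields the stated identities.

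\emph{Main obstacle: degenerate cases of the equality statement.} These are $T=0$, and $q=p$, where the bound is $0$.
\begin{itemize}
\item If $T=0$, then $\sigma_q\ge0$ together with the monotone averages forces $k_{q}=\dots=k_n$ with total sum zero. In the intended regime of the equality claim this gives $A=0$, so this case is excluded by $A\neq0$.
\item If $q=p$ and some $K_aK_{\star a}=0$, then one of the two factors vanishes. If $K_a=0$, then $\sigma_p\le0$, so $\sigma_p=0$. If $K_{\star a}=0$, then $\sigma_{n-p}=0$, which via monotone averages forces $\sigma_p=0$ as well. In either case the averaging argument then gives $k_{p+1}=\dots=k_n=T/(n-p)$, as claimed.
\end{itemize}
I would check these edge cases carefully, but the core of the proof is the two-sided localization $-\mu\le K_a\le T$ combined with the concavity of $s(T-s)$.
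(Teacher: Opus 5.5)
Your proof of the inequality, and of strictness for $q$-positive $A$, is correct but organized differently from the paper's. The paper first identifies the minimizer, showing $\lambda_{\{n-p+1,\dots,n\}}-\lambda_{\{1,\dots,p\}}=C(D-B)\ge 0$, and then bounds $|k_1+\dots+k_p|$ through $k_q\le \tr A/(n-q)$. You instead trap every $K_a$ in $[-\mu,\tr A]$ and use concavity of $s(\tr A-s)$ on that interval. The upper end comes from $K_{\star a}\ge\sigma_{n-p}\ge 0$, the lower end from averaging over $k_{p+1},\dots,k_n$. The ingredients are the same. The gain is that you never need to know which $a$ is the minimizer, and strictness reduces to $K_a$ lying in the open interval. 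That is cleaner than the paper's strictness argument, which goes through the equality analysis. Your equality argument for $p<q$ is also correct. Your $T=0$ remark in fact holds unconditionally: $q$-nonnegativity with $q\le n-1$ and $\tr A=0$ already force $A=0$.

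The gap is the equality case $p=q$. There the averaging inequality of Step 2 concerns the empty sum $k_{p+1}+\dots+k_q$, so equality in it says nothing about $k_{p+1},\dots,k_n$. Your final claim, that the averaging argument gives $k_{p+1}=\dots=k_n=T/(n-p)$, is therefore unsupported. It cannot be repaired, because the assertion is false. Take $n=3$, $p=q=1$ and $A=\mathrm{diag}(0,0,1)$. Then $A\neq 0$ is $1$-nonnegative, and the eigenvalues of $T_A^{[1]}$ are $k_i(\tr A-k_i)=0,0,0$. So equality holds in \eqref{T_q-conv}, whose right-hand side is $0$. Yet $k_2=0\neq 1=k_3$, while the lemma would require $k_2=k_3=\tr A/2$.

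The paper's own proof has the same hole. It asserts that equality with $A\neq 0$ forces $T_A^{[p]}$ to have a negative eigenvalue, which fails when $p=q$ because the bound is then $0$. The equality characterization should be restricted to $p<q$. In that range $A\neq0$ gives $\tr A>0$, the bound is strictly negative, and your concavity argument forcing $K_a=\sigma_p=-\mu$ goes through. This is also the range relevant to Theorem~\ref{mainA2}, where $\ell\le\beta_q<q$.
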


\begin{proof}
Let $k_1\leq\dots\leq k_n$ denote the eigenvalues of $A$. We claim that 
\[
\min_{a\in I_p} \lambda_a(T_A^{[p]})=\lambda_{\{1,\dots,p\}}.
\]
Indeed, the function $t\mapsto t(\tr A-t)$ is concave, so the minimum value 
of $\lambda_a$ is attained when $a$ either minimizes or maximizes $K_a$ 
among all $p$-subsets of $\{1,\dots,n\}$. Due to the ordering of the $k_i$'s, 
this happens when $a=\{1,\dots,p\}$ or $a=\{n-p+1,\dots,n\}$, respectively, 
so we have to show that
    \begin{equation} \label{lamcomp}
        \lambda_{\{n-p+1,\dots,n\}} \geq \lambda_{\{1,\dots,p\}}.
    \end{equation}
    We compute
    \[
        \lambda_{\{1,\dots,p\}} = \sum_{i=1}^p k_i \sum_{i=p+1}^n k_i, \qquad \lambda_{\{n-p+1,\dots,n\}} = \sum_{i=1}^{n-p} k_i \sum_{i=n-p+1}^n k_i \, .
    \]
    Note that $p\leq \frac{n}{2}$ by assumption $p\leq\min\{q,n-q\}$. If $n$ is even and $p=\frac{n}{2}$ then clearly $\lambda_{\{1,\dots,p\}} = \lambda_{\{n-p+1,\dots,n\}}$, so let us suppose that $p<\frac{n}{2}$. In this case we have $p<n-p$, so the sets $\{1,\dots,p\}$, $\{p+1,\dots,n-p\}$ and $\{n-p+1,\dots,n\}$ define a partition of $\{1,\dots,n\}$ and setting
    \[
        B = \sum_{i=1}^p k_i, \qquad C = \sum_{i=p+1}^{n-p} k_i, \qquad D = \sum_{i=n-p+1}^n k_i
    \]
    we have
    \[
        \lambda_{\{1,\dots,p\}} = B(C+D), \qquad \lambda_{\{n-p+1,\dots,n\}} = D(B+C) .
    \]
    Note that $B\leq D$ since the sums defining them contain the same number of terms and the terms appearing in the definition of $B$ are all less or equal than those appearing in the definition of $D$. Moreover, since $p\leq n-q$ we have $q\leq n-p$, so $B+C\geq 0$ by $q$-nonnegativity of $A$ and therefore $C\geq 0$. (Indeed, if it were $C<0$ then we would have $k_{p+1} < 0$, so $k_i<0$ for all $i\leq p$ implying $B<0$ and then $B+C<0$, contradiction.) In view of this, we have
    \[
        \lambda_{\{n-p+1,\dots,n\}} - \lambda_{\{1,\dots,p\}} = C(D - B) \geq 0
    \]
    and this proves \eqref{lamcomp}.

    If $\lambda_{\{1,\dots,p\}} \geq 0$ then \eqref{T_q-conv} is trivial, so let us suppose that $\lambda_{\{1,\dots,p\}} < 0$. Note that
    \[
        K_{\{1,\dots,p\}} + K_{\{p+1,\dots,n\}} = \tr A \geq 0
    \]
    by $q$-nonnegativity of $A$ and that $K_{\{1,\dots,p\}} = B \leq C+D = K_{\{p+1,\dots,n\}}$ by what we already observed. Since $\lambda_{\{1,\dots,p\}} = B(C+D)$, the assumption $\lambda_{\{1,\dots,p\}} < 0$ implies in particular that
    $
        B = K_{\{1,\dots,p\}} < 0 \, .
    $
    We observe that
    \[
        0 \leq k_q \leq \frac{\tr A}{n-q} \, .
    \]
    Indeed, since $A$ is $q$-non-negative we have
    \[
        0 \leq \sum_{i=1}^q k_i \leq qk_q,
    \]
    and
    \begin{equation} \label{Teq1}
        (n-q) k_q \leq \sum_{i=q+1}^n k_i = \tr A - \sum_{i=1}^q k_i \leq \tr A \, .
    \end{equation}
    Using this, we estimate
    \begin{equation} \label{Teq2}
        0 \leq \sum_{i=1}^q k_i = \sum_{i=1}^p k_i + \sum_{j=p+1}^q k_j \leq \sum_{i=1}^p k_i + (q-p) k_q \leq \sum_{i=1}^p k_i + \frac{q-p}{n-q} \tr A
    \end{equation}
    that is,
    \[
        |K_{\{1,\dots,p\}}| = - \sum_{i=1}^p k_i \leq \frac{q-p}{n-q} \tr A
    \]
    and then
    \begin{equation} \label{Teq3}
        |\lambda_{\{1,\dots,p\}}| = |K_{\{1,\dots,p\}} | (\tr A + |K_{\{1,\dots,p\}}|) \leq \frac{(n-p)(q-p)}{(n-q)^2} (\tr A)^2
    \end{equation}
    as desired. If $A\neq 0$ and equality holds in \eqref{T_q-conv}, then $T_A^{[p]}$ must have at least one negative eigenvalue, so $\lambda_{\{1,\dots,p\}} < 0$ 
    indeed, and all inequalities in \eqref{Teq1}, \eqref{Teq2} and \eqref{Teq3} must be satisfied with equality sign. 
    In particular, $k_1+\dots+k_q = 0$ and $k_{p+1} = \dots = k_q = \dots = k_n$ implying
    \begin{equation} \label{Teq4}
        \tr A = \sum_{i=q+1}^n k_i = (n-q)k_q \qquad \text{and} \qquad \sum_{i=1}^p k_i = -\sum_{i=p+1}^q k_i = -(q-p)k_q
    \end{equation}
    as claimed.

 If $A$ is $q$-positive then let us first note that it must be $p<q$, since otherwise the second identity in \eqref{Teq4} would give
    \[
        \sum_{i=1}^q k_i = \sum_{i=1}^p k_i = 0 .
    \]
    Thus, if $A$ is $q$-positive then we have strict inequality in the last of \eqref{Teq1} and therefore, since $p<q$, also in the last of \eqref{Teq2} and in \eqref{Teq3}.
    \qed
\end{proof}

\section{The Bochner-Weitzenb\"ock operator}\label{bwoper}

Let $M$ be a closed, connected and oriented Riemannian manifold of 
dimension $n$. For each integer $1\leq p\leq n-1$, the Hodge Laplacian 
acting on $p$-forms is defined by 
$$
\Delta=dd^*+d^* d: \Omega^p(M)\to \Omega^p(M),
$$
where $d$ and $d^*$ are the differential and the co-differential operators, respectively. 
For every $p$-form $\omega$ the Bochner-Weitzenb\"ock formula states that the Hodge Laplacian is given by
\be\label{boch.form}
\Delta\omega=\n^*\n\omega+\B^{[p]}\omega,
\ee
where $\n^*\n$ is the connection Laplacian and 
$$
\B^{[p]}\colon \Omega^p(M)\to \Omega^p(M)
$$
is a certain symmetric and self-adjoint endomorphism on the bundle of $p$-forms, called the {\it Bochner-Weitzenb\"ock operator}.

\begin{proposition}\label{pbochner}
Let $M$ be a closed, connected and oriented $n$-dimensional Riemannian manifold and let $1\leq p\leq \frac{n}{2}$.
The following assertions hold:
\begin{enumerate}[topsep=2pt,itemsep=2pt,partopsep=2ex,parsep=0.5ex,leftmargin=*, label={\rm(\arabic*)}, align=left, labelsep=0.4em]
\item If $\B^{[p]}\geq 0$, then $b_p(M)\leq \binom{n}{p}.$
\item If $\B^{[p]}\geq 0$, and the strict inequality holds at some point, then the de Rham cohomology satisfies
$$H^p(M,\R)=H^{n-p}(M,\R)=0.$$ 
Moreover, if $p=1$, then $M$ admits a Riemannian metric of positive Ricci curvature, and has finite fundamental group. In particular, if $n=3$, then it is diffeomorphic to a spherical space form.
\item If $\B^{[p]}\geq 0$ and $H^p(M,\R)\neq0$, then every harmonic $p$-form is parallel. 
In particular, $M$ supports a nontrivial parallel $p$-form.
\end{enumerate}
\end{proposition}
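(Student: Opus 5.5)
The plan is the classical Bochner argument, applied to harmonic forms through the Weitzenb\"ock formula \eqref{boch.form}. By Hodge theory, $H^p(M,\R)$ is isomorphic to the space $\mathcal{H}^p$ of harmonic $p$-forms. Let $\omega\in\mathcal{H}^p$. Take the $L^2$ inner product of \eqref{boch.form} with $\omega$ and integrate by parts over the closed manifold $M$. This gives
\[
0=\int_M \langle\Delta\omega,\omega\rangle=\int_M |\nabla\omega|^2+\int_M\langle\B^{[p]}\omega,\omega\rangle .
\]
If $\B^{[p]}\geq 0$, both terms on the right are nonnegative, so both vanish. Hence $\nabla\omega=0$ and $\langle\B^{[p]}\omega,\omega\rangle\equiv 0$. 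In particular every harmonic $p$-form is parallel, which is assertion (3): a nonzero class in $H^p$ gives a nonzero harmonic form, and that form is parallel.

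For assertion (1), a parallel form is determined by its value at a single point $x_0$. Parallel transport along curves from $x_0$, using connectedness, shows that the evaluation map $\mathcal{H}^p\to\Lambda^pT^*_{x_0}M$ is injective. Therefore $b_p(M)=\dim\mathcal{H}^p\leq\binom np$.

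For assertion (2), suppose $\B^{[p]}>0$ at some point $x$. A parallel form $\omega$ has constant norm, and $\langle\B^{[p]}\omega,\omega\rangle(x)=0$ then forces $\omega(x)=0$, so $\omega\equiv 0$. Thus $H^p(M,\R)=0$, and Poincar\'e duality (via the Hodge star, using orientability) gives $H^{n-p}(M,\R)=0$.

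When $p=1$ we have $\B^{[1]}=\mathrm{Ric}$, so the hypothesis says $\mathrm{Ric}\geq 0$ with $\mathrm{Ric}>0$ at some point. By the Aubin/Ehrhardt deformation result (Aubin's theorem), $M$ then carries a metric of positive Ricci curvature. Bonnet--Myers applied to the universal cover shows that the cover is compact, so $\pi_1(M)$ is finite. For $n=3$, Hamilton's theorem on $3$-manifolds with positive Ricci curvature gives that $M$ is diffeomorphic to a spherical space form.

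The analytic steps above are routine. The main care point is the $p=1$ statement: the analytic vanishing argument does not produce the positive Ricci metric, which must instead come from the external deformation theorem. The rest of that case is the citation of Bonnet--Myers and Hamilton.
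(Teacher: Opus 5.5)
Your proposal is correct and takes the same route as the paper. For $p=1$ the paper argues exactly as you do: it identifies $\B^{[1]}$ with Ricci, then uses Aubin's deformation theorem, Bonnet--Myers, and Hamilton for $n=3$. For the remaining assertions it simply cites Savo and Onti--Vlachos, whose proofs are the integrated Bochner identity, parallelism of harmonic forms and evaluation at a point that you wrote out (one cosmetic slip: the second name usually attached to the Ricci deformation result is Ehrlich, not Ehrhardt).
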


\begin{proof}
We argue for (2) when $p=1$. Recall that $$\<\B^{[1]}\omega,\omega\>=\Ric(\omega^\#,\omega^\#)$$ where $\omega^\#$ denotes the dual vector field corresponding to the $1$-form $\omega$. Therefore, in this case we have that the Ricci curvature is everywhere non-negative and at some point is strictly positive.  
Then, it follows from Aubin \cite{A} that $M$ carries a metric with positive Ricci curvature and the Bonnet-Myers theorem implies that the fundamental group of $M$ is finite. 
In particular, if $n=3$ then the result follows from \cite{ha82}. For the rest of the proofs see {\cite[Proposition 3]{Savo14} and \cite[Proposition 15]{ov22}}. \qed
\end{proof}

\medskip

The following is a special case of Theorem 1.9 in \cite{pw21} adapted to our needs.

\begin{proposition}\label{peterthm}
Let $n\geq 3,\; \kappa<0$ and $D>0$, and let $M$ be a closed $n$-dimensional 
Riemannian manifold with ${\rm{Ric}}(M)\geq (n-1) \kappa$ and ${\rm diam}(M)\leq D$. 
Assume that for some integer $1\leq p\leq \frac{n}{2}$ there exists $\delta>0$ such that 
$
\B^{[p]}\geq \delta \kappa. 
$
Then, there exists a constant $C(n,\kappa D^2)>0$ such that 
the dimension of the kernel of the Hodge Laplacian 
$$
{\rm ker}(\Delta)=\{\omega\in \Omega^p(M)\; :\; \Delta\omega=\n^*\n \omega + \B^{[p]}\omega=0\}
$$
is bounded by 
$$
\binom np \cdot \exp\left( C(n,\kappa D^2)\cdot \sqrt{-\kappa \delta D^2}\right).
$$
Moreover, there exists $\varepsilon(n,\delta)>0$ such that if $\kappa D^2 \geq -\varepsilon(n,\delta)$, then 
$
\dim {\rm ker}(\Delta)\leq \binom np.
$
\end{proposition}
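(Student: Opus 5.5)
This is the closed-manifold special case of Petersen--Wink's Theorem 1.9, so the plan is to reduce to their estimate rather than reprove it from scratch. Their strategy is a Bochner argument combined with a Moser iteration / Li--Gallot type $L^\infty$--$L^2$ estimate for sections of a vector bundle satisfying a subharmonicity inequality.

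\textbf{Kato-type inequality.} Let $\omega$ be a harmonic $p$-form, so $\nabla^*\nabla\omega+\mathcal{B}^{[p]}\omega=0$. Taking the inner product with $\omega$ and using $\mathcal{B}^{[p]}\geq \delta\kappa$ gives
\[
\tfrac12\Delta_{\rm rough}|\omega|^2 = |\nabla\omega|^2 + \langle \mathcal{B}^{[p]}\omega,\omega\rangle \geq |\nabla \omega|^2 + \delta\kappa|\omega|^2 ,
\]
with the sign convention in which $\Delta$ is the nonnegative rough Laplacian. Combining this with Kato's inequality $|\nabla|\omega||\leq|\nabla\omega|$, we get, in the weak sense,
\[
\Delta|\omega| \geq \delta\kappa |\omega| ,
\]
where $\Delta$ now denotes the analyst's Laplacian. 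Equivalently, $u=|\omega|$ is a nonnegative subsolution of $\Delta u + \lambda u\geq 0$ with $\lambda=-\delta\kappa>0$.

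\textbf{$L^\infty$--$L^2$ estimate.} Since $\mathrm{Ric}\geq (n-1)\kappa$ and $\mathrm{diam}\leq D$, Gallot's isoperimetric and Sobolev constants are controlled by $\kappa D^2$. Moser iteration applied to $u^2$ then yields
\[
\|u\|_\infty^2 \leq K\, \frac{1}{\mathrm{vol}(M)}\|u\|_2^2 ,
\]
with $K$ depending on $n$, $\kappa D^2$ and $\lambda D^2$. Tracking the dependence as Petersen--Wink do gives the form $K\leq \exp\bigl(C(n,\kappa D^2)\sqrt{\lambda D^2}\bigr)$, and $K\leq 1+\eta$ whenever $\lambda D^2$ is small.

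\textbf{Dimension count (Li's lemma).} For a finite-dimensional space $E$ of sections of a rank-$N$ bundle with $N=\binom np$, Li's lemma gives: if $\|s\|_\infty^2\leq K\,\mathrm{vol}^{-1}\|s\|_2^2$ for every $s\in E$, then $\dim E\leq N K$. Applying this to $E=\ker\Delta$ yields
\[
\dim\ker\Delta \leq \binom np \exp\bigl(C(n,\kappa D^2)\sqrt{-\kappa\delta D^2}\bigr).
\]

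\textbf{Sharp case.} If $\kappa D^2\geq-\varepsilon(n,\delta)$, then $NK<N+1$, and since the dimension is an integer we get $\dim\ker\Delta\leq\binom np$.

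\textbf{Main obstacle.} The delicate step is keeping the exponent of the form $\sqrt{-\kappa\delta D^2}$ with constant depending only on $n$ and $\kappa D^2$. This requires the refined Moser iteration in Petersen--Wink, done after rescaling so that $D=1$. I would invoke their Theorem 1.9 and Li's lemma directly rather than redo these estimates.
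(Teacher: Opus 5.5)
Your proposal is correct and matches the paper. The paper gives no proof of its own: it quotes the statement as a special case of Theorem 1.9 of Petersen--Wink, adapted to a general lower bound $\delta\kappa$. Your outline (Bochner plus Kato, a Gallot/Moser $L^\infty$--$L^2$ estimate under $\mathrm{Ric}\geq(n-1)\kappa$ and $\mathrm{diam}\leq D$, then Li's dimension lemma) is the standard argument behind that result. One cosmetic slip: the identity $\tfrac12\Delta|\omega|^2=|\nabla\omega|^2+\langle\mathcal{B}^{[p]}\omega,\omega\rangle$ holds for the analyst's (nonpositive) Laplacian, not the nonnegative rough Laplacian; your subsequent inequality $\Delta|\omega|\geq\delta\kappa|\omega|$ already uses the correct convention.
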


\subsection{The relation with the Curvature Operator} 

Let $M$ be an $n$-dimensional Riemannian manifold 
with Riemannian connection $\n$ and curvature tensor $R$ 
defined by 
$$R(X,Y)=[\n_X,\n_Y]-\n_{[X,Y]},\; X,Y\in TM.$$
For each $X,Y\in TM$ we can view $R(X,Y)$ as a skew-symmetric endomorphism on $TM$. 
Next, recall that the curvature operator is the self-adjoint operator $\mathfrak{R}\colon \Lambda^2TM\to \Lambda^2TM$ defined by 
$$\<R(X,Y)Z,W\>=\<\mathfrak{R}(X\wedge Y),W\wedge Z\>,$$
where $\Lambda^2TM$ inherits naturally the inner product coming from the Riemannian metric of $M$. Every 
$X\wedge Y\in \Lambda^2TM$ acts on $TM$ as a skew symmetric endomorphism via 
$$
(X\wedge Y)Z=\<X,Z\>Y-\<Y,Z\>X,\; Z\in TM.
$$
Therefore, each element of $\Lambda^2TM$ can be viewed as a skew symmetric endomorphism of $TM$. 
For this reason we write $\mathfrak{so}(TM)=\Lambda^2TM$ and endow $\mathfrak{so}(TM)$ with the 
inner product that comes from $\Lambda^2TM$. Now, every element of $\mathfrak{so}(TM)$ induces a derivative on 
$p$-forms in the following way: if $\omega\in \Omega^p(M)$ and $L\in \mathfrak{so}(TM)$ then 
\[
(L\omega)(X_1,\dots,X_p)=-\sum_{i=1}^p \omega(X_1,\dots,LX_i,\dots,X_p).
\]
To each $p$-form $\omega$ we assign a tensor $\hat \omega$ with values in $\Lambda^2TM$ defined by:
\[
\<L,\hat\omega(X_1,\dots,X_p)\>=(L\omega)(X_1,\dots,X_p)\quad \text{for all}\quad L\in \mathfrak{so}(TM)=\Lambda^2TM.
\]
It follows from \cite[Lemma 9.4.9, page 354]{p16} that the Bochner-Weitzenb\"ock operator and the curvature operator of $M$ 
are related via the formula
\begin{equation}\label{R}
\<\B^{[p]}\omega,\phi\>=\<\mathfrak{R}(\hat\omega),\hat\phi\> \quad \text{for all}\quad \omega,\phi\in\Omega^p(M).
\end{equation}

\subsection{The splitting of the Bochner-Weitzenb\"ock Operator}

Let $f\colon M\to \tilde M, n\geq 3,$ be an isometric immersion into a Riemannian manifold $\tilde M$ of dimension $n+m$.
The second fundamental form $\a_f$ is viewed as a section of the vector bundle $\mathrm{Hom}(TM\times TM,N_f M)$, where 
$N_f M$ is the normal bundle. For each unit normal vector field $\xi\in \Gamma(N_fM)$, the associated 
shape operator $A_{\xi}$ is given by 
$$
\<A_{\xi} X,Y\>=\langle \alpha_f(X,Y),\xi\rangle,\: \: X,Y\in TM.
$$
The (normalized) \textit{mean curvature vector field} of $f$ is $\mathcal H=(\mathrm{tr}\, \alpha_f)/n$, where $\mathrm{tr}$ 
means taking the trace. In particular, when $m=1$ the \textit{mean curvature} $nH$ is the trace of the shape operator and when $M$ is orientable 
we always choose an orientation such that $H\geq 0$.
Let $R$ and $\tilde R$ be the curvature tensors of $M$ and $\tilde M$ respectively. Then from the 
Gauss equation we have 
\[
\<R(X,Y)Z,W\>=\<\tilde R(X,Y)Z,W\>+\<R_{\rm ext}(X,Y)Z,W\> \quad X,Y,Z,W\in TM,
\]
where 
\[
\<R_{\rm ext}(X,Y)Z,W\>=\<\alpha_f(X,Z),\alpha_f(Y,W)\>-\<\alpha_f(X,W),\alpha_f(Y,Z)\>.
\] 
Accordingly, we can split the curvature operator $\mathfrak{R}$ of $M$ as the sum of two self-adjoint operators
acting on $\Lambda^2TM$ by
\begin{equation}\label{Rsplit}
\mathfrak{R}=\mathfrak{R}_{\rm res}+\mathfrak{R}_{\rm ext}
\end{equation}
where $\mathfrak{R}_{\rm res}$ and $\mathfrak{R}_{\rm ext}$ are defined by
\[
\begin{aligned}
\langle \mathfrak{R}_{\rm res}(X\wedge Y), W\wedge Z\rangle &= \langle \tilde{\mathfrak{R}}(X\wedge Y), W\wedge Z\rangle, \\
\langle \mathfrak{R}_{\rm ext}(X\wedge Y), W\wedge Z\rangle &= \<\alpha(X,Z),\alpha(Y,W)\>-\<\alpha(X,W),\alpha(Y,Z)\>.
\end{aligned}
\]
Let $\{\xi_1,\dots,\xi_m\}$ be an orthonormal frame of the normal bundle. Then 
$$\mathfrak{R}_{\rm ext}=\sum_{\alpha=1}^m \mathfrak{R}_{\rm ext}^{(\alpha)},$$ where 
\[
\langle \mathfrak{R}_{\rm ext}^{(\alpha)}(X\wedge Y), W\wedge Z\rangle = \<A_{\xi_\alpha}(X),Z\>\<A_{\xi_\alpha}(Y),W\>-\<A_{\xi_\alpha}(X),W\>\<A_{\xi_\alpha}(Y),Z\>
\]
It follows from \eqref{R} and \eqref{Rsplit} that the Bochner-Weitzenb\"ock operator $\B^{[p]}$ of $M$ acting on $p$-forms splits as
\begin{equation}\label{Bsplit}
\B^{[p]}=\B^{[p]}_{{\rm res}}+\B^{[p]}_{{\rm ext}},
\end{equation}
where
\begin{equation}\label{Bsplit2}
\begin{aligned}
\langle \B^{[p]}_{{\rm res}}\omega,\phi\rangle &= \<\mathfrak{R}_{\rm res}(\hat\omega),\hat\phi\>
=\<\tilde{\mathfrak{R}}(\hat\omega),\hat\phi\>,  \\
\langle \B^{[p]}_{{\rm ext}}\omega,\phi\rangle &= \<\mathfrak{R}_{\rm ext}(\hat\omega),\hat\phi\>
=\sum_{\alpha=1}^m \<\mathfrak{R}_{\rm ext}^{(\alpha)}(\hat\omega),\hat\phi\>.
\end{aligned}
\end{equation}
It is not hard to see that the splitting in \eqref{Bsplit} is in fact the one already obtained 
by Savo \cite[Theorem 1]{Savo14}, where he followed the approach of Petersen \cite{p98} via 
the formalism of Clifford multiplication. Indeed, for each unit vector field $\xi\in \Gamma(N_fM)$, we define 
the endomorphism 
$$
T_{A_\xi}^{[p]}= (\tr A_\xi)A_\xi^{[p]}-A_\xi^{[p]}\circ A_\xi^{[p]},
$$
where 
$
A_\xi^{[p]}\colon \Omega^p(M)\to \Omega^p(M)
$
is the self-adjoint extension of the shape operator $A_\xi$ in the direction $\xi$ defined by 
$$
A_\xi^{[p]}\omega(X_1,\dots,X_p)=\sum_{i=1}^p \omega(X_1,\dots,A_\xi X_i,\dots,X_p),\; X_1,\dots,X_p\in TM.
$$
Then $\B^{[p]}_{\rm{ext}}$ is given by 
\begin{equation}\label{Bext}
\B^{[p]}_{{\rm ext}}=\sum_{\alpha=1}^m T_{A_{\xi_\alpha}}^{[p]}.
\end{equation}

The proof of \eqref{Bext} follows directly from the following algebraic result.

\begin{lemma}
Let $A$ be a self-adjoint endomorphism of $TM$ and consider the associated self-adjoint ``curvature operator" 
$\mathfrak{R}_A$ acting on $\Lambda^2TM$ determined uniquely by the formula 
\[
\langle \mathfrak{R}_A(X\wedge Y), W\wedge Z\rangle = \<A(X),Z\>\<A(Y),W\>-\<A(X),W\>\<A(Y),Z\>
\]
for all $X,Y,Z,W\in TM$. Then, the self-adjoint operator $T_A^{[p]}\colon \Omega^p(M)\to \Omega^p(M)$ defined by 
\begin{equation}\label{Tau}
\langle T_A^{[p]}\omega,\phi\rangle=\<\mathfrak{R}_A(\hat\omega),\hat\phi\>
\end{equation}
can be written as 
\begin{equation}\label{TauSa}
T_A^{[p]}=(\tr A)A^{[p]}-A^{[p]}\circ A^{[p]},
\end{equation}
where $A^{[p]}$ is the self-adjoint extension of $A$ to $\Omega^p(M)$.
\end{lemma}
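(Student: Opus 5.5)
We will verify \eqref{TauSa} pointwise in an orthonormal eigenbasis of $A$, where both sides turn out to be diagonal with the same eigenvalues. Fix $x\in M$ and let $e_1,\dots,e_n$ be an orthonormal basis of $T_xM$ with $Ae_i=k_ie_i$. Let $e^1,\dots,e^n$ be the dual coframe, and for $a=\{i_1<\dots<i_p\}\in I_p$ write $e^a=e^{i_1}\wedge\dots\wedge e^{i_p}$.

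\emph{The operator $\mathfrak{R}_A$.} From the defining formula,
\[
\langle \mathfrak{R}_A(e_i\wedge e_j),e_l\wedge e_k\rangle=k_ik_j(\delta_{ik}\delta_{jl}-\delta_{il}\delta_{jk}).
\]
This is the same sign convention as for $\mathfrak{R}$ and $R$ in the paper. Hence $\mathfrak{R}_A(e_i\wedge e_j)=k_ik_j\, e_i\wedge e_j$, so $\mathfrak{R}_A$ is diagonal in the basis $\{e_i\wedge e_j\}_{i<j}$ with eigenvalues $k_ik_j$. Since $\mathfrak{R}_A=A\wedge A$ in the usual sense, this is immediate once the sign is checked.

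\emph{The tensor $\hat\omega$.} We compute $\hat\omega$ for $\omega=e^a$. With $L=e_i\wedge e_j$ we have $Le_i=-e_j$, $Le_j=e_i$, and $Le_k=0$ otherwise. Therefore $L e^a$ vanishes unless exactly one of $i,j$ lies in $a$. If $i\in a$ and $j\notin a$, then $Le^a=\pm e^{a'}$, where $a'=(a\smallsetminus\{i\})\cup\{j\}$. Consequently
\[
\hat{e^a}=\sum_{i\in a,\ j\notin a}\pm\,(e_i\wedge e_j)\otimes e^{a'},
\]
where the sign is determined by reordering. For two multi-indices $a\ne b$, the tensors $\hat{e^a}$ and $\hat{e^b}$ pair under $\mathfrak{R}_A$ only through terms with equal $e_i\wedge e_j$ factor and equal form factor. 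In that case $a'=b'$ with the same swapped pair $\{i,j\}$, which forces $a=b$ unless $b=(a\smallsetminus\{i\})\cup\{j\}$ and $a=(b\smallsetminus \{j\})\cup\{i\}$. But the latter would require the form factors $e^{a'}$ and $e^{b'}$ to be built from the same index sets, and they are not. This is the step where the sign bookkeeping must be done carefully. I expect it to be the main obstacle: one must confirm that the off-diagonal terms vanish, using that $\mathfrak{R}_A$ is diagonal on the $e_i\wedge e_j$ basis, and that each diagonal term carries sign $+1$.

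\emph{The diagonal entries.} Granting this, we get
\[
\langle T_A^{[p]}e^a,e^a\rangle=\sum_{i\in a,\,j\notin a}k_ik_j=K_a K_{\star a}.
\]
On the other side, $A^{[p]}e^a=K_ae^a$, so
\[
\big((\tr A)A^{[p]}-A^{[p]}\circ A^{[p]}\big)e^a=(K_a+K_{\star a})K_a\,e^a-K_a^2e^a=K_aK_{\star a}\,e^a .
\]
The two self-adjoint operators are therefore diagonal in the same basis $\{e^a\}$ with the same eigenvalues, so they coincide at $x$. Since $x$ was arbitrary, \eqref{TauSa} holds. This also recovers \eqref{lambda}.

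\emph{Alternative for the off-diagonal step.} If the off-diagonal verification proves messy, I would use polarization and the bilinearity of both sides in $A$, reducing to $A=v\otimes v$ type rank-one and rank-two cases. However, the eigenbasis computation is the intended route.
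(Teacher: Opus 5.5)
Your proposal is essentially the paper's proof: diagonalize $A$, check that the forms $e^a$ are mutually orthogonal for $\omega\mapsto\langle\mathfrak{R}_A(\hat\omega),\hat\omega\rangle$ with diagonal values $\sum_{i\in a,\,j\in\star a}k_ik_j=K_aK_{\star a}$, and compare with Savo's eigenvalues of $(\tr A)A^{[p]}-A^{[p]}\circ A^{[p]}$; the off-diagonal step you single out as the main obstacle is in fact immediate, because each component $(e_i\wedge e_j)\otimes e^{a'}$ of $\hat{e^a}$ determines $a$ as the symmetric difference of $a'$ and $\{i,j\}$, so $\hat{e^a}$ and $\hat{e^b}$ have no common component when $a\neq b$, while the diagonal contributions are $(\pm1)^2k_ik_j$, so no sign bookkeeping is needed. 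One caveat, which the paper shares: your own displayed formula yields $\langle\mathfrak{R}_A(e_i\wedge e_j),e_i\wedge e_j\rangle=-k_ik_j$, so your ``Hence'' does not follow as written, and the identity $T_A^{[p]}=(\tr A)A^{[p]}-A^{[p]}\circ A^{[p]}$ actually holds for $\mathfrak{R}_A(X\wedge Y)=AX\wedge AY$, i.e.\ with $Z$ and $W$ interchanged in the defining formula (which is what the Gauss equation gives under the convention $\langle R(X,Y)Z,W\rangle=\langle\mathfrak{R}(X\wedge Y),W\wedge Z\rangle$), and this is the sign the paper's own computation tacitly uses.
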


\begin{proof}
Let $\{e_1,\dots,e_n\}$ be an orthonormal basis diagonalizing $A$ with corresponding 
eigenvalues $k_1,\dots,k_n$. Denote by $\{\theta_1,\dots,\theta_n\}$ its dual basis and 
for each $a=\{i_1,\dots,i_p\}\in I_p$ consider the $p$-form 
$$
\Theta_a=\theta_{i_1}\wedge\dots\wedge \theta_{i_p}.
$$
Savo proved in \cite{Savo14} that $\Theta_a$ diagonalizes $T_A^{[p]}$ given by \eqref{TauSa} 
with corresponding eigenvalues $K_aK_{\star a}$, where 
$$
K_a=\sum_{i\in a} k_i \quad \text{and}\quad K_{\star a}=\sum_{i\in \star a} k_i. 
$$ 
A direct computation shows that 
$T_A^{[p]}$ defined by \eqref{Tau} also shares the same property. Indeed,
\begin{equation}\nonumber
\begin{aligned}
\langle T_A^{[p]}\Theta_a,\Theta_a\rangle &=\<\mathfrak{R}_A(\hat\Theta_a),\hat\Theta_a\> \\
&=\sum_{\ell_1,\dots,\ell_p} \<\mathfrak{R}_A(\hat\Theta_a)(e_{\ell_1},\dots,e_{\ell_p}),\hat\Theta_a(e_{\ell_1},\dots,e_{\ell_p})\> \\
&=\frac{1}{2}\sum_{\ell_1,\dots,\ell_p} \sum_{i\neq j} k_ik_j \<\hat\Theta_a(e_{\ell_1},\dots,e_{\ell_p}),e_i\wedge e_j\>^2 \\
&=\frac{1}{2}\sum_{\ell_1,\dots,\ell_p} \sum_{i\neq j} k_ik_j \Big(\big((e_i\wedge e_j)\Theta_a\big)(e_{\ell_1},\dots,e_{\ell_p})\Big)^2 \\
&=\frac{1}{2}\sum_{\ell_1,\dots,\ell_p} \sum_{i\neq j} k_ik_j \Big(\sum_k \Theta_a(e_{\ell_1},\dots,(e_i\wedge e_j)e_{\ell_k},\dots,e_{\ell_p})\Big)^2 \\
&=\frac{1}{2}\sum_{\ell_1,\dots,\ell_p} \sum_{i\neq j} k_ik_j \Big(\sum_k \Theta_a(e_{\ell_1},\dots,\<e_i,e_{\ell_k}\>e_j-\<e_j,e_{\ell_k}\>e_i,\dots,e_{\ell_p})\Big)^2 \\
&=\sum_{i\in a,\; j\in\star a} k_ik_j =K_aK_{\star a}. \\
\end{aligned}
\end{equation}
It is also very easy to check that $\langle T_A^{[p]}\Theta_a,\Theta_b\rangle =0$ for $a\neq b$, and this completes the proof.
\qed
\end{proof}

\medskip 

\noindent Next, we extend \cite[Theorem 1]{Savo14} by applying results obtained 
by Petersen and Wink in \cite{pw21}.

\begin{lemma}\label{Bres}
Let $\tilde{\lambda}_1\leq \cdots\leq \tilde{\lambda}_{\binom{n+m}{2}}$ denote the 
eigenvalues of $\tilde{\mathfrak{R}}$ and assume that for some real number $c$ 
and some integer $1\leq p\leq \lfloor \frac{n}{2}\rfloor$, the eigenvalues satisfy 
\begin{equation}\label{lambda_t}
\frac{\tilde{\lambda}_1+\cdots+\tilde{\lambda}_{n-p}}{n-p}\geq c.
\end{equation}
Then 
\[
\<\B^{[\ell]}_{{\rm res}}\omega,\omega\>\geq c\ell (n-\ell)|\omega|^2,\quad \text{for all}\quad 1\leq \ell \leq p
\] 
and non-zero $\ell$-forms $\omega$. 
\end{lemma}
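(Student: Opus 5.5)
We follow the argument of Petersen and Wink \cite{pw21} for the intrinsic Bochner operator, applied pointwise to the restricted operator $\tilde{\mathfrak{R}}|_{\Lambda^2 T_xM}$. Fix $x$ and an $\ell$-form $\omega$ at $x$. By \eqref{Bsplit2}, $\langle \B^{[\ell]}_{\rm res}\omega,\omega\rangle=\langle\tilde{\mathfrak{R}}(\hat\omega),\hat\omega\rangle$. Here $\hat\omega$ takes values in $\Lambda^2T_xM\subset\Lambda^2T_{f(x)}\tilde M$, using the inclusion $df$. So the quantity only sees $\tilde{\mathfrak R}$ through its compression to the subspace $\Lambda^2 T_xM$.

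\emph{Step 1: eigenvalue interlacing.} The compression $P\tilde{\mathfrak R}P$ to $\Lambda^2T_xM$, of dimension $\binom n2$, has eigenvalues $\mu_1\le\dots\le\mu_{\binom n2}$. By Cauchy interlacing (min-max), $\mu_j\ge\tilde\lambda_j$ for every $j$. Hence
\[
\frac{\mu_1+\dots+\mu_{n-p}}{n-p}\ge\frac{\tilde\lambda_1+\dots+\tilde\lambda_{n-p}}{n-p}\ge c .
\]
Since averages of the smallest eigenvalues are nondecreasing in the number of terms, the same bound holds for $\frac{1}{n-\ell}(\mu_1+\dots+\mu_{n-\ell})$ whenever $\ell\le p$, because then $n-\ell\ge n-p$.

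\emph{Step 2: apply Petersen--Wink.} The key lemma of \cite{pw21} (their Lemma 2.1/Prop.\ 3.x) concerns an algebraic curvature operator $\mathfrak S$ on an $n$-dimensional space whose smallest $n-\ell$ eigenvalues have average at least $c$. For such $\mathfrak S$ it gives $\langle\mathfrak S(\hat\omega),\hat\omega\rangle\ge c\,\ell(n-\ell)|\omega|^2$ for $\ell\le n/2$. We apply it to $\mathfrak S=P\tilde{\mathfrak R}P$, which is a genuine algebraic curvature operator (it satisfies Bianchi, being the curvature tensor restricted to tangent vectors). Their proof decomposes $\hat\omega$ in an eigenbasis of $\mathfrak S$ and uses two facts: the norm identity $|\hat\omega|^2=\ell(n-\ell)|\omega|^2$, and the weight bound $|\langle\hat\omega,\Xi_\alpha\rangle|^2\le\frac{1}{n-\ell}|\hat\omega|^2$ for unit $\Xi_\alpha$. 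These give
\[
\langle\mathfrak S\hat\omega,\hat\omega\rangle=\sum_\alpha\mu_\alpha|\hat\omega_\alpha|^2\ge \frac{|\hat\omega|^2}{n-\ell}\sum_{\alpha\le n-\ell}\mu_\alpha \ge c\,\ell(n-\ell)|\omega|^2 .
\]
The middle inequality is a discrete bathtub argument. The weights are at most $|\hat\omega|^2/(n-\ell)$ and sum to $|\hat\omega|^2$, so the sum is minimized by putting the maximal weight on the $n-\ell$ smallest eigenvalues. This requires $\binom n2\ge n-\ell$, which holds.

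\emph{Main obstacle.} The delicate point is justifying the weight bound $|\hat\omega_\alpha|^2\le|\hat\omega|^2/(n-\ell)$ in our setting. This is exactly the content of \cite[Lemma 2.1]{pw21}, valid for $\ell\le n/2$, so we would invoke it directly. The remaining care is to check that interlacing only needs the compression, and that no Bianchi identity is actually used in the bathtub estimate, only self-adjointness. Once that is confirmed, the bound holds for any self-adjoint operator on $\Lambda^2 T_xM$, and the inequality follows for all $1\le\ell\le p$.
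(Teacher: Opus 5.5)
Your proposal is correct and is essentially the paper's proof. It passes to the compression $\mathfrak{R}_{\rm res}$ of $\tilde{\mathfrak{R}}$ to $\Lambda^2T_xM$ and bounds the average of its smallest $n-p$ eigenvalues below by that of $\tilde{\mathfrak{R}}$ (Cauchy interlacing in your version, Lemma~\ref{basic_lem} in the paper's), handles $\ell<p$ by monotonicity of these averages, and concludes with the bathtub estimate of \cite[Lemma 2.1]{pw21} and $|\hat\omega|^2=\ell(n-\ell)|\omega|^2$. One citation is off: Lemma 2.1 of \cite{pw21} takes the weight bound $|L\omega|^2\le\frac{1}{n-\ell}|\hat\omega|^2|L|^2$ as a hypothesis, and for $\ell$-forms with $\ell\le n/2$ the paper instead cites Lemma 2.2(b) and Proposition 2.5(a) of \cite{pw21} for it.
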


\begin{proof}
We only need to prove it for $\ell=p$. To this end, 
we apply \cite[Lemma 2.1]{pw21} for $\mathfrak{R}_{\rm res}$. Indeed, 
if $\lambda_1\leq \cdots\leq \lambda_{\binom{n}{2}}$ are its eigenvalues, then it 
follows from \eqref{Bsplit2}, \eqref{lambda_t} and Lemma \ref{basic_lem} that 
\[
\frac{\lambda_1+\cdots+\lambda_{n-p}}{n-p}\geq 
\frac{\tilde{\lambda}_1+\cdots+\tilde{\lambda}_{n-p}}{n-p}\geq c.
\]
Since from \cite[Lemma 2.2(b) and Proposition 2.5(a)]{pw21} every $p$-form $\omega$ satisfies 
\[
|L\omega|^2\leq \frac{1}{n-p}|\hat\omega|^2|L|^2\quad \text{for all}\quad L\in\mathfrak{so}(TM)=\Lambda^2TM,
\]
the proof follows. \qed
\end{proof}

\medskip 

The following result plays a crucial role in the proofs of our main results.

\begin{proposition}\label{propb}
Fix integers $n\geq 3$, $1\leq p\leq \lfloor \frac{n}{2} \rfloor$ and $1\leq q\leq n-1$.
Let $M$ be an $(n+1)$-dimensional Riemannian manifold such that 
\begin{equation}\label{teigenlb}
\frac{\lambda_1+\cdots+\lambda_{n-p}}{n-p}\geq c,
\end{equation}
for some real constant $c$, where $\lambda_1\leq \cdots\leq \lambda_{\binom{n+1}{2}}$ 
denote the eigenvalues of its curvature operator. If $f\colon \Sigma\to M$ is a closed, connected, oriented, 
$q$-convex immersed hypersurface, then its Bochner-Weitzenb\"ock operator satisfies pointwise the inequality
\be\label{boch2}
\mathop{\min_{\omega\in \Omega^\ell(\Sigma)}}_{\|\omega\|=1}\<\B^{[\ell]}\omega,\omega\>
\geq \ell(n-\ell)\left(c -\frac{q-\ell}{\ell}\left(\frac{n}{n-q}\right)^2 H^2\right),
\ee
for all $1\leq \ell \leq \min\{p,\min\{q,n-q\}\}$. Moreover, \eqref{boch2} is strict if 
either \eqref{teigenlb} is strict or $f$ is strictly $q$-convex. 
Furthermore, if equality holds in \eqref{boch2} at some point $x\in \Sigma$ and the shape operator $A(x)\neq 0$, 
then the principal curvatures $k_1(x)\leq\dots\leq k_n(x)$ satisfy
\[
k_1(x) + \dots + k_\ell(x) = - \frac{q-\ell}{n-q} n H(x) \leq 0 
\quad \text{and} \quad 
k_{\ell+1}(x) = \dots = k_n(x) = \frac{nH(x)}{n-q} \geq 0.
\]

\end{proposition}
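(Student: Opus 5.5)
The plan is to use the splitting \eqref{Bsplit} of the Bochner–Weitzenb\"ock operator of $\Sigma$ and bound its two pieces separately at each point $x\in\Sigma$. Since the codimension is one, \eqref{Bext} reduces to $\B^{[\ell]}_{\rm ext}=T_A^{[\ell]}$, where $A$ is the shape operator with respect to $\nu$ and $\tr A=nH\ge 0$. Hence, for every unit $\ell$-form $\omega$,
\[
\<\B^{[\ell]}\omega,\omega\>=\<\B^{[\ell]}_{\rm res}\omega,\omega\>+\<T_A^{[\ell]}\omega,\omega\>,
\]
and it suffices to bound each term from below by the corresponding part of the right-hand side of \eqref{boch2}.

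For the restricted part I apply Lemma \ref{Bres} with $m=1$ and ambient $M$. Hypothesis \eqref{teigenlb} is exactly \eqref{lambda_t}, and $\ell\le p\le\lfloor n/2\rfloor$, so
\[
\<\B^{[\ell]}_{\rm res}\omega,\omega\>\ge c\,\ell(n-\ell).
\]
For the extrinsic part, $q$-convexity of $f$ means $A$ is $q$-nonnegative, and $\ell\le\min\{q,n-q\}$. Lemma \ref{prop} with $p$ replaced by $\ell$ then gives
\[
\<T_A^{[\ell]}\omega,\omega\>\ge-\frac{(n-\ell)(q-\ell)}{(n-q)^2}n^2H^2 = -\ell(n-\ell)\,\frac{q-\ell}{\ell}\Big(\frac{n}{n-q}\Big)^2H^2.
\]
Adding the two bounds yields \eqref{boch2}.

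For strictness I treat two cases. If $f$ is strictly $q$-convex at $x$, the strict version of Lemma \ref{prop} applies directly. If \eqref{teigenlb} is strict, I need strictness in Lemma \ref{Bres}. That argument runs through \cite[Lemma 2.1]{pw21}: when the average of the smallest $n-p$ eigenvalues of $\mathfrak R_{\rm res}$ is strictly bigger than $c$, replacing $c$ by a slightly larger $c'$ gives the strict inequality for $\omega\neq0$. This is the step I expect to need the most care. One has to check that the interlacing inequality from Lemma \ref{basic_lem} preserves strictness, which it does because the bound is applied with $c'$. One also has to check that passing from $p$ to $\ell\le p$ is legitimate, using that the average of the smallest $n-\ell\ge n-p$ eigenvalues is at least the average of the smallest $n-p$.

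Finally, suppose equality holds in \eqref{boch2} at $x$ and $A(x)\ne0$. Take a unit minimizer $\omega$. Since each of the two lower bounds holds separately and they add up to equality, both must be equalities. In particular
\[
\min_a \lambda_a(T_A^{[\ell]})=-\frac{(n-\ell)(q-\ell)}{(n-q)^2}(\tr A)^2 .
\]
The equality clause of Lemma \ref{prop}, with $\tr A=nH$, then gives the stated values of $k_1+\dots+k_\ell$ and of $k_{\ell+1}=\dots=k_n$.
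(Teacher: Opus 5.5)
Your argument is the paper's own: its proof only cites \eqref{Bsplit}, \eqref{Bext}, Lemma \ref{prop} and Lemma \ref{Bres}. The inequality \eqref{boch2} and both strictness cases go through as you describe, and replacing $c$ by some $c'>c$ at the given point is a clean way to get strictness.

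\textbf{The gap is in your last step, and it is inherited from the paper.} The equality clause of Lemma \ref{prop} is false when $p=q$, and Proposition \ref{propb} allows $\ell=q$ whenever $q\le p$. For $\ell=q$ the right-hand side of \eqref{T_q-conv} is $0$. Equality then only gives $K_{\{1,\dots,q\}}K_{\{q+1,\dots,n\}}=0$, from which only $k_1+\dots+k_q=0$ follows. The paper's proof of the clause starts from ``$T_A^{[p]}$ must have at least one negative eigenvalue'', and that fails here.

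\emph{Algebraic counterexample.} Take $n=3$, $q=\ell=p=1$ and $A$ with eigenvalues $(0,1,2)$. The eigenvalues of $T_A^{[1]}$ are $0,2,2$, so equality holds in \eqref{T_q-conv}, yet $k_2\neq k_3$.

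\emph{Geometric counterexample.} Take the origin on the boundary of the convex body $\{x_1^4+\tfrac12x_2^2+x_3^2+(t-1)^2\le 1\}\subset\R^4$, with flat ambient and $c=0$. The principal curvatures there are $(0,\tfrac12,1)$, so $\min\Ric=k_1(k_2+k_3)=0$. This is equality in \eqref{boch2} with $A\neq0$. But the asserted $k_2=k_3=\tfrac{nH}{n-q}=\tfrac34$ fails.

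So the ``furthermore'' part, and your derivation of it, is wrong for $\ell=q$.

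\textbf{For $\ell<q$ your step is valid,} because the paper's proof of the clause works there.
\begin{itemize}
\item $q$-nonnegativity together with $A\neq0$ forces $\tr A>0$. Indeed, if $\tr A=0$, then $k_q\ge0$ makes both $k_1+\dots+k_q$ and $k_{q+1}+\dots+k_n$ nonnegative with zero sum, which gives $A=0$.
\item Hence the bound $-\frac{(n-\ell)(q-\ell)}{(n-q)^2}(\tr A)^2$ is strictly negative, and equality forces $\lambda_{\{1,\dots,\ell\}}<0$.
\item Then \eqref{Teq1}--\eqref{Teq3} must all be equalities. Dividing $(q-\ell)k_q=\frac{q-\ell}{n-q}\tr A$ by $q-\ell>0$ gives $k_q=\frac{\tr A}{n-q}$. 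From this, $k_{\ell+1}=\dots=k_n=\frac{nH}{n-q}$ and $k_1+\dots+k_\ell=-\frac{q-\ell}{n-q}nH$ follow.
\end{itemize}
The division by $q-\ell$ is exactly where $\ell<q$ is needed. So the last assertion should carry the extra hypothesis $\ell<q$. This costs nothing downstream, since Theorem \ref{mainA2} only uses $\ell\le\beta_q\le q-1$.
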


\begin{proof}
Follows from \eqref{Bsplit}, \eqref{Bext}, Lemma \ref{prop} and Lemma \ref{Bres}.
\qed
\end{proof}

\section{Proofs of the main Theorems}\label{mainproofs}

\noindent{\it Proof of Theorem \ref{mainA1}}. 
From Proposition \ref{propb} we have $\B^{[i]}\geq 0$ for all integers $q\leq i\leq p$. 
The result follows from Proposition \ref{pbochner}(1)-(3) and Poincar\'{e} duality. \qed

\medskip

\noindent{\it Proof of Theorem \ref{mainB1}}. 
From Proposition \ref{propb} we have 
$\B^{[i]}\geq i(n-i)c$ for all integers $q\leq i\leq p$. 
The result follows from Proposition \ref{peterthm} and the fact that $b_i(\Sigma)$ 
is equal to the dimension of the kernel of the Hodge Laplacian. \qed

\medskip

\noindent{\it Proof of Theorem \ref{mainA2}}. 
Assume $q\leq \frac{n}{2}$. From Proposition \ref{propb} and \eqref{qpinched1} we have that 
$\B^{[i]}\geq 0$ for all integers $\ell\leq i\leq \min\{p,q-1\}$. 
Cases (1)-(3) follow by applying Proposition \ref{pbochner}(1)-(3). 
To see that equality 
holds everywhere in \eqref{qpinched1} in case (3),
consider a non-zero $\ell$-form $\omega$ that lies in the kernel of the Hodge Laplacian.
Then, from \eqref{boch.form} we obtain
\[
0=|\n\omega|^2+\<\B^{[\ell]}\omega,\omega\>,
\]
which implies that $\<\B^{[\ell]}\omega,\omega\>=0$. 
The desired equality now follows by using \eqref{boch2} and \eqref{qpinched1}. 
Assume $q> \frac{n}{2}$. From Proposition \ref{propb} and \eqref{qpinched1} we have that 
$\B^{[i]}\geq 0$ for all integers $\ell\leq i\leq \min\{p,n-q\}$. 
The cases (1)-(3) are proved similarly as before. 
\qed

\medskip

\noindent{\it Proof of Theorem \ref{mainB2}}.  
Assume $q \leq \frac{n}{2}$. It follows from Proposition \ref{propb} that
$\B^{[\ell]}\geq \ell(n-\ell)\kappa_\ell$, for each $1\leq \ell\leq \min\{p,q-1\}$, 
where 
\[
\kappa_\ell=\min_{x\in \Sigma}\left(c -\frac{q-\ell}{\ell}\left(\frac{n}{n-q}\right)^2 H^2\right).
\]
If moreover $q\leq p$, then from Proposition \ref{propb} we have 
$\B^{[i]}\geq i(n-i)c$ for all $q \leq i\leq p$. 
The result follows from Proposition \ref{peterthm} and Poincar\'{e} duality. 
On the other hand, if $q> \frac{n}{2}$, then from Proposition \ref{propb} we have
$\B^{[\ell]}\geq \ell(n-\ell)\kappa_\ell$ 
for each $1\leq \ell\leq \min\{p,n-q\}$, 
where $\kappa_\ell$ is as before. The result again follows from 
Proposition \ref{peterthm} and Poincar\'{e} duality. 
\qed

\section{The case of the sphere}\label{amb-sphere}

In this section we discuss the special case where the ambient 
space is the unit sphere $\mathbb{S}^{n+1}$. 
Recall the standard immersion of a torus 
$$
\mathbb T^n_p(r)=\mathbb{S}^p(r) \times \mathbb{S}^{n-p}(\sqrt{1-r^2})  
$$
into $\mathbb{S}^{n+1}$, where $\mathbb{S}^p(r)$ denotes the $p$-dimensional sphere of radius $r<1$. 
The principal curvatures are $-\sqrt{1-r^2}/r$ and $r/\sqrt{1-r^2}$ of multiplicity $p$ and $n-p$, respectively. 
A direct computation gives that 
\be\label{mean}
H=\frac{nr^2-p}{nr\sqrt{1-r^2}}
\ee
with $r^2\geq p/n$. We note that for $1\leq q\leq n-1$ and $1\leq p\leq \beta_q$ (where, $\beta_q$ is as in \eqref{beta}),
the torus $\mathbb T^n_p(r)$ is $q$-convex, when $r^2\geq p/q$.

\begin{theorem}\label{sphere}
Let $f\colon \Sigma\to  \Sf^{n+1},  n\geq 3$, be a closed, connected, oriented $q$-convex immersed hypersurface for some integer $2\leq q\leq n-1$. 
Set $\beta_q$ as in \eqref{beta}.  If for some integer $1\leq p\leq \beta_q$ the mean curvature satisfies 
\be\label{qpinched2}
H\leq \frac{n-q}{n}\sqrt{\frac{p}{q-p}},
\ee
then
\medskip
\begin{enumerate}[topsep=1pt,itemsep=1pt,partopsep=1ex,parsep=0.5ex,leftmargin=*, label={\rm(\arabic*)}, align=left, labelsep=0.4em]

 \item $ b_i(\Sigma)=b_{n-i}(\Sigma)\leq \binom{n}{i}$ for each $p\leq i\leq \beta_q.$
\item If strict inequality holds at some point, then 
$$
b_p(\Sigma)=\cdots=b_{\beta_q}(\Sigma)=0 \quad \text{and}\quad  b_{n-\beta_q}(\Sigma)=\cdots=b_{n-p}(\Sigma)=0.
$$
Moreover, if $p=1$, then $\Sigma$ admits a Riemannian metric of positive Ricci curvature and has finite fundamental group. 
\item If $b_p(\Sigma)>0$, then $f(\Sigma)$ is isometric to the torus $\mathbb T^n_p(\sqrt{p/q})$.
\end{enumerate}
\medskip
Furthermore, if $q\leq \frac{n}{2}$, then $\Sigma$ has the homotopy type of a CW-complex with no cells of dimension $i$ for $q\leq i\leq n-q$.
In particular, if $q=2$, then the fundamental group $\pi_1(\Sigma)$ is a free group on $b_1(\Sigma;\Z)$ generators 
and if moreover, $\pi_1(\Sigma)$  is finite, then $\Sigma$ is homeomorphic to the sphere $\Sf^n$. 
\end{theorem}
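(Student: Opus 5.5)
Parts (1)--(3) come from specializing Theorem \ref{mainA2} to $M=\Sf^{n+1}$. The curvature operator of the unit sphere is the identity on $\Lambda^2 T\Sf^{n+1}$, so every eigenvalue is $1$. Hence $c=1$ in \eqref{constc} for every $p$, and $\Sf^{n+1}$ has $(n-p)$-positive curvature operator. Take the ``$p$'' of Theorem \ref{mainA2} to be $\lfloor n/2\rfloor$ and its ``$\ell$'' to be the present $p$. This is admissible because $p\le\beta_q\le\lfloor n/2\rfloor$. Then \eqref{qpinched2} is exactly \eqref{qpinched1}, and $\min\{\lfloor n/2\rfloor,\beta_q\}=\beta_q$. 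Items (1) and (2) then follow verbatim, together with the Ricci/fundamental group statement for $p=1$.

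For (3), Theorem \ref{mainA2}(3) gives equality in \eqref{qpinched2} everywhere, so $H\equiv\frac{n-q}{n}\sqrt{p/(q-p)}>0$. It also gives that every harmonic $p$-form $\omega$ is parallel with $\langle\B^{[p]}\omega,\omega\rangle=0$. Since $\omega$ is parallel it has constant nonzero norm, so equality holds in \eqref{boch2} at every point, and $A\neq0$ because $H>0$. The equality case of Proposition \ref{propb} then says the principal curvatures are $k_{p+1}=\dots=k_n=\frac{nH}{n-q}=\sqrt{p/(q-p)}$ and $k_1+\dots+k_p=-(q-p)\sqrt{p/(q-p)}$. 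I also need $k_1=\dots=k_p$. For this I would use the equality in Lemma \ref{Bres} from the spherical part: for $\B_{\rm res}$ on the sphere we have $\langle\B_{\rm res}\omega,\omega\rangle=p(n-p)|\omega|^2$ exactly, so equality must come entirely from the $T_A^{[p]}$ part. Then $\omega$ lies in the eigenspace of $T^{[p]}_A$ for the lowest eigenvalue $K_aK_{\star a}$. Combined with the Gauss equation and the parallelism of $\omega$, this should force the distributions $E_1=\ker(A-\mu I)^\perp$ and $E_2$ to be parallel. Alternatively, one can argue via Codazzi: since $k_{p+1},\dots,k_n$ is a constant eigenvalue of multiplicity $n-p\ge 2$, its eigendistribution is totally geodesic/umbilic, and the remaining eigenvalues have constant sum. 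So $\Sigma$ is isoparametric-like, and Cartan's classification (or Ryan's theorem on hypersurfaces with two principal curvatures) gives a Clifford torus. Matching curvatures, $\mu=r/\sqrt{1-r^2}=\sqrt{p/(q-p)}$ gives $r^2=p/q$, which is $\mathbb T^n_p(\sqrt{p/q})$. I expect proving that the $p$ smallest principal curvatures are all equal, and constant, to be the main obstacle. I would try first to show that the kernel/support of the parallel form $\omega$ gives a parallel splitting $T\Sigma=E_1\oplus E_2$ with $\dim E_1=p$. Then Codazzi would force $A|_{E_1}$ to be a constant multiple of the identity, using Gauss curvature computations on the flat-normal product.

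For the final CW statement, the plan is Morse theory in the style of Sha/Wu. Consider height-type functions: for a generic point $v\in\Sf^{n+1}$, take $\phi=\langle f,v\rangle$ or the distance $d(v,f(\cdot))$. At a critical point, the Hessian of $\phi$ on $\Sigma$ is $-\phi\,\mathrm{Id}+\langle\nu,v\rangle A$. Choosing $v$ so that the normal coefficient dominates appropriately, $q$-convexity says the sum of any $q$ Hessian eigenvalues has a sign. So the index is $<q$ for one sign of the function, and coindex $<q$ for the other; equivalently the index avoids $[q,n-q]$. Pick $v$ such that this dominance holds at every critical point; I expect this to need the pinching, or at least $H>0$ arranged by a suitable choice of $v$. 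A Morse function with no critical points of index in $[q,n-q]$ gives the CW structure. For $q=2$ there are no $2$-cells, so $\pi_1$ is free, with rank equal to $b_1(\Sigma;\Z)$ by abelianization. If $\pi_1$ is finite it is trivial, so $\Sigma$ is simply connected with $H_i=0$ for $2\le i\le n-2$. Poincaré duality then makes it a homotopy sphere, and the generalized Poincaré conjecture gives a homeomorphism to $\Sf^n$.
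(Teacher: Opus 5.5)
Your parts (1)--(2) are exactly the paper's argument (Theorem \ref{mainA2} with $c=1$), and your deductions from the cell structure when $q=2$ also match the paper. Part (3), however, is not proved. Everything hinges on showing $k_1=\dots=k_p$, and you leave this as ``should force''.

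\textbf{Part (3).} Your Codazzi alternative is a non sequitur for $p\ge2$. A constant principal curvature of multiplicity $n-p$, together with a constant sum of the other $p$, does not make $\Sigma$ isoparametric. Only for $p=1$ does the equality case already make $k_1=-(q-1)\mu$ constant. The paper obtains the rigidity by quoting Savo's Theorem 4.

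Your parallel-form idea does work, provided you use the divisor space of $\omega$ rather than its support, since $\omega$ is not known to be decomposable. Put $\mu=\sqrt{p/(q-p)}$. Let $E_-$ be the sum of the eigenspaces of $A$ with eigenvalue $<\mu$, and let $m=\dim E_-$; then $m\ge1$ because $k_1<0$.
\begin{enumerate}
\item Since $\B^{[p]}\omega=0$ and $\B^{[p]}_{\rm res}=p(n-p)$, $\omega$ lies pointwise in the bottom eigenspace of $T_A^{[p]}$. By \eqref{lambda} and $p<n/2$, this eigenspace is $\theta_{E_-}\wedge\Lambda^{p-m}E_\mu^*$, where $\theta_{E_-}$ is the volume form of $E_-$.
\item Hence $D=\{X: X^\flat\wedge\omega=0\}$ is a parallel distribution with $E_-\subseteq D$ and $\dim D\le p<n$. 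It is $A$-invariant, and $D^\perp\subseteq E_\mu$.
\item Mixed sectional curvatures between $D$ and $D^\perp$ vanish. The Gauss equation then gives $1+k\mu=0$ for every eigenvalue $k$ of $A|_D$.
\item So $D=E_-$ and $k_1=\dots=k_m=-1/\mu$. The relation $k_1+\dots+k_p=-(q-p)\mu$ then forces $m=p$.
\end{enumerate}
With two constant principal curvatures, $f(\Sigma)$ is $\mathbb T^n_p(\sqrt{p/q})$.

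\textbf{The CW assertion.} You follow the same route as the paper, but you leave open the existence of a good $v$, so this part is also unproved. Your worry is justified. Your Hessian $\langle\nu,v\rangle A-\langle g,v\rangle\,\mathrm{Id}$ is correct, and it forces the index to be $<q$ or $>n-q$ only at critical points where $\langle g,v\rangle\langle\nu,v\rangle\le0$.

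The paper takes an \emph{arbitrary} Morse height function and calls the index bound ``clear''. That is the Euclidean argument, and it drops exactly the term $-\langle g,v\rangle\,\mathrm{Id}$ you found. So, as written, the paper does not supply the missing step either, and the pinching does not rescue an arbitrary choice of $v$.

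Here is a concrete example with $q=2$, $p=1$, $n\ge4$. Take the graph $y\mapsto\cos u(y)\,y+\sin u(y)\,e_{n+2}$ over $\Sf^n$, with $u=u_0+\eta\sum_i a_iy_i^2$, $a_1<\dots<a_{n+1}$, and $0<\eta\ll u_0\ll1$. This hypersurface is strictly convex with $H<\frac{n-2}{n}$. Yet $h=\langle g,e_{n+2}\rangle=\sin u$ is Morse and has a critical point of index $2$ over $y=e_3$, and $2\in[q,n-q]$. A genuinely additional ingredient is needed here: either a justified choice of $v$ or a different argument.
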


As an immediate consequence we get the following:

\begin{corollary}\label{cor2con}
Let $f\colon \Sigma\to  \Sf^{n+1},  n\geq 4$, be a closed, connected, oriented, $2$-convex immersed hypersurface with
$$H\leq \frac{n-2}{n}.$$ If at some point the above inequality is strict, then $\Sigma$ is homeomorphic to the sphere $\Sf^n$.
\end{corollary}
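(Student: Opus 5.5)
The statement to prove is Corollary \ref{cor2con}. The plan is to apply Theorem \ref{sphere} with $q=2$ and $p=1$, and then finish with a simple-connectivity argument.

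\textbf{Applying Theorem \ref{sphere}.} For $q=2\le n/2$ we have $\beta_2=q-1=1$, so $p=1$ is admissible. With $p=1$ and $q=2$, condition \eqref{qpinched2} reads
\[
H\le \frac{n-2}{n}\sqrt{\frac{1}{2-1}}=\frac{n-2}{n},
\]
which is exactly our hypothesis. Since this inequality is strict at some point, part (2) of Theorem \ref{sphere} gives $b_1(\Sigma)=0$. It also tells us that $\pi_1(\Sigma)$ is finite, because $\Sigma$ admits a metric of positive Ricci curvature.

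\textbf{Finishing.} The last part of Theorem \ref{sphere} applies since $q=2\le n/2$, which holds because $n\ge 4$. It says that $\pi_1(\Sigma)$ is free on $b_1(\Sigma;\Z)$ generators. A finite free group is trivial, so $\Sigma$ is simply connected. That same part of the theorem then states directly that, when $\pi_1(\Sigma)$ is finite, $\Sigma$ is homeomorphic to $\Sf^n$.

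If one wants to see why that last claim holds: $\Sigma$ has the homotopy type of a CW-complex with no cells in dimensions $2\le i\le n-2$. Hence $H_i(\Sigma;\Z)=0$ for $2\le i\le n-2$. Simple connectivity gives $H_1=0$, and Poincaré duality together with universal coefficients gives $H_{n-1}=0$. So $\Sigma$ is a simply connected integral homology sphere, hence a homotopy sphere by Hurewicz and Whitehead, and therefore homeomorphic to $\Sf^n$ by the generalized Poincaré conjecture, valid for $n\ge 4$.

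\textbf{Main obstacle.} The only delicate point is checking that the index choices land in the allowed ranges: $\beta_2=1$ and $q\le n/2$. This is where $n\ge 4$ is needed, rather than $n\ge 3$. Everything else is a direct citation of Theorem \ref{sphere}.
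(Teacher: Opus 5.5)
Your proposal is correct and is exactly the paper's argument: the paper proves the corollary by applying Theorem~\ref{sphere} with $p=1$ and $q=2$, so that $\beta_2=1$ and the pinching condition reduces to $H\le\frac{n-2}{n}$. Your unpacking of the final step, with $n\ge 4$ needed precisely to get $q=2\le n/2$, reproduces the Morse-theoretic argument at the end of the proof of Theorem~\ref{sphere}: no cells in dimensions $2\le i\le n-2$, $\pi_1$ free and hence trivial, an integral homology sphere, and then the generalized Poincar\'e conjecture.
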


\begin{proof}
Apply Theorem \ref{sphere} for $p=1$ and $q=2$. \qed
\end{proof}

\begin{remarks}\label{remsph}\ \ 
\smallskip
\rm{
\begin{enumerate}[topsep=1pt,itemsep=1pt,partopsep=1ex,parsep=0.5ex,leftmargin=*, label={\rm(\arabic*)}, align=left, labelsep=0.2em]
 \item Theorem \ref{sphere} can be viewed as an ``extension'' of the main Theorem of do Carmo and Warner \cite{docawa}.
 \item Corollary \ref{cor2con} is sharp since $\mathbb T^n_1(1/\sqrt{2})$ is $2$-convex with $H=\frac{n-2}{n}$.
 \item An easy example satisfying \eqref{qpinched2} with strict inequality is $\mathbb T^n_{p-1}(\sqrt{(p-1)/q})$, 
 provided that $p>1$. 
 \item Inequality \eqref{qpinched1} is sharp since $\mathbb T^n_\ell(\sqrt{\ell/q})$ is $q$-convex with $H=\frac{n-q}{n}\sqrt{\frac{\ell}{q-\ell}}$.
 \item Corollary \ref{corsh} is sharp since $\mathbb T^n_1(1/\sqrt{\lceil \frac{n}{2} \rceil})$ is $\lceil \frac{n}{2} \rceil$-convex with $H=\frac{n-\lceil \frac{n}{2} \rceil}{n\sqrt{\lceil \frac{n}{2} \rceil-1}}$.
\end{enumerate} 
}
\end{remarks}

\medskip

\noindent{\it Proof of Theorem \ref{sphere}}. 
If for some integer $1\leq p\leq \beta_q$ inequality \eqref{qpinched2} is satisfied, then cases
(1) and (2) follow directly from Theorem \ref{mainA2}(1),(2), for $c=1$. If $b_p(\Sigma)>0$ then from Theorem \ref{mainA2}(3)
we get equality in \eqref{qpinched2} everywhere and the result follows from \cite[Theorem 4]{Savo14}. This concludes also case (3). \qed

Now, assume $q\leq \frac{n}{2}$. It follows that at 
each point the number of negative principal curvatures is less than $q$. 
Let $v\in\R^{n+2}$ be a unit vector such that the height function 
$h\colon \Sigma\to \R$ defined by $h=\<g,v\>$ is a Morse function, 
where $g$ is the isometric immersion $g=j\circ f$, and 
$j\colon \Sf^{n+1}\to \R^{n+2}$ is the inclusion. 
The Hessian of $h$ is given by
$$
\mathrm{Hess}\, h(X,Y)=\< \a_g(X,Y),v\>,\;X,Y\in T\Sigma.
$$
It is clear that the index at each non-degenerate critical point of $h$ is 
less than $q$ or greater than $n-q$. Therefore, it follows from standard 
Morse theory (cf. \cite[Th. 3.5]{Milnor63} or \cite[Th. 4.10]{CE75}) that 
$M$ has the homotopy type of a CW-complex with no cells of dimension 
$i$ for $q\leq i\leq n-q$. If in addition $q=2$, then there are no $2$-cells, and 
thus, by the cellular approximation theorem, the inclusion of the $1$-skeleton 
$\mathrm{X}^{(1)}\hookrightarrow \Sigma$ induces isomorphism between the 
fundamental groups. Therefore, $\pi_1(\Sigma)$ is a free group on $b_1(\Sigma;\Z)$ 
elements. Hence, if $\pi_1(\Sigma)$ is finite, then $\pi_1(\Sigma)=0$. Then $\Sigma$ 
is a simply connected homology sphere over the integers and therefore a 
homotopy sphere. By the (generalized) Poincar\'e conjecture (Smale $n\geq 5$, 
Freedman $n=4$, Perelman $n=3$), $\Sigma$ is homeomorphic to $\Sf^n$.

\vspace{10mm}

\noindent Giulio Colombo\\
Dipartimento di Matematica ``Federigo Enriques"\\
Universit\'{a} degli Studi di Milano\\
Via C. Saldini 50, 20133 Milano (Italy)\\
e-mail: giulio.colombo@unimi.it
\bigskip
\bigskip

\noindent Christos-Raent Onti\\
Department of Mathematics and Statistics\\
University of Cyprus\\
1678, Nicosia -- Cyprus\\
e-mail: onti.christos-raent@ucy.ac.cy
\bigskip

\end{document}